\newtheorem{theo}{Tplottin ubuntuheorem}[section]
\theoremstyle{plain}
\newtheorem{thm}[theo]{Theorem}
\newtheorem{lem}[theo]{Lemma}
\newtheorem*{thm*}{Theorem}
\newtheorem*{lem*}{Lemma}
\newtheorem*{prop*}{Proposition}
\newtheorem*{cor*}{Corollary}
\theoremstyle{definition}
\newtheorem{defn}[theo]{Definition}
\newtheorem{cons}[theo]{Construction}
\newtheorem{rem}[theo]{Remark}
\newcommand{\Db}{\mathbb{D}}
\newcommand{\Nb}{\mathbb{N}}
\newcommand{\Rb}{\mathbb{R}}
\newcommand{\Zb}{\mathbb{Z}}
\newcommand{\Lc}{\mathcal{L}}
\newcommand{\Uc}{\mathcal{U}}
\newcommand{\Fs}{\mathscr{F}}
\newcommand{\Bfr}{\mathfrak{B}}
\newcommand{\hook}{\hookrightarrow}
\newcommand{\SO}{\mathrm{SO}}
\newcommand{\SU}{\mathrm{SU}}
\newcommand{\Id}{\mathrm{Id}}
\newcommand{\xto}{\xrightarrow}
\newcommand{\Dic}{\mathrm{Dic}}
\begin{document}

\title[Topologically protected tricolorings]{Bordism invariants of colored links and topologically protected tricolorings}

\author*[1,2]{\fnm{Toni} \sur{Annala}}\email{tannala@math.ias.edu}

\author[2]{\fnm{Hermanni} \sur{Rajamäki}}\email{hermanni.rajamaki@live.com}

\author[2]{\fnm{Mikko} \sur{Möttönen}}\email{mikko.mottonen@aalto.fi}

\affil*[1]{\orgdiv{School of Mathematics}, \orgname{Institute for Advanced Study}, \orgaddress{\street{1 Einstein Drive}, \city{Princeton}, \postcode{08540}, \state{NJ}, \country{USA}}}

\affil[2]{\orgdiv{QCD Labs, QTF Centre of Excellence,
Department of Applied Physics}, \orgname{Aalto University}, \orgaddress{\street{P.O. Box 13500}, \city{Aalto}, \postcode{FI-00076}, \country{Finland}}}


\abstract{We construct invariants of colored links using equivariant bordism groups of Conner and Floyd. We employ this bordism invariant to find the first examples of topological vortex knots, the knot structure of which is protected from decaying via topologically allowed local surgeries, i.e., by reconnections and strand crossings permitted by the topology of the vortex-supporting medium. Moreover, we show that, up to the aforementioned local surgeries, each tricolored link either decays into unlinked simple loops, or can be transformed into either a left-handed or a right-handed tricolored trefoil knot.}

\keywords{topological vortices, knots and links, bordism}


\maketitle

\section{Introduction}\label{sect:intro}

The theory of topological defects in ordered media \cite{volovik:1977, mermin:1979, mineev:1998} provides an intriguing opportunity to employ the methods of geometry and topology in the study of concrete physical phenomena, which are often amenable to numerical and experimental investigation. Accordingly, topological defects have attracted persistent attention from scientists from various backgrounds. On the theoretical side, cosmic ``Alice'' strings in theoretical cosmology \cite{bucher:1992, bucher:1992b} provide a potential answer to the baryogenesis problem, while the zoo of liquid crystals of various types leads to a delightfully diverse set of geometric theories \cite{kamien:2000, beller:2014, machon:2019}. On the experimental side, recent successes include the experimental realizations of synthetic Dirac \cite{pietila:2009b, ray:2014} and isolated monopoles \cite{ray:2015}, as well as the realization of a quantum ``knot'' \cite{hall:2016}, which is a three-dimensional soliton that is closely related to the famous Hopf fibration.

Here, we investigate topological vortices in three-dimensional systems, i.e., singular defects, the singular locus of which are one dimensional. For simplicity, we assume that the singular locus is a disjoint union of smoothly embedded circles, i.e., a link. The local structure of such defects is well understood, as the local types of topological vortices correspond to conjugacy classes in $\pi_1(X)$, where $X$ is the \emph{order parameter space}, which is a topological space that parameterizes the possible local configurations the physical system may assume \cite{volovik:1977, mermin:1979}. However, the global defect structure is more subtle. Unknotted ring defects, and unlinked disjoint union thereof, may be classified by pairs of $\pi_1$ and $\pi_2$ charges, up to the action of $\pi_1$ \cite{nakanishi:1988,annala-mottonen}. However, most types of ordered media admit intricate knotted and linked vortex configurations. Even for one of the simplest cases, that of the nematic liquid crystal (for which $\pi_1(X) = C_2$), the classification is no simpler than the classification of all links up to isotopy \cite{machon:2013, machon:2014}.

In order to explain how to make the global defect structure more amenable to classification, we take a short historical detour. The idea of knotted and linked vortex loops is an old one, and may be traced back to Kelvin's vortex atom hypothesis \cite{thomson:1869}, which was based on the observation that the linking type of a vortex core remains unchanged in the dynamical evolution of a dissipationless ideal fluid. However, this assumption is violated in realistic situations, as the knotted vortex structure tends to decay via local reconnection events \cite{kleckner:2013, kleckner:2014, kleckner:2016}. Taking into account reconnection events in the context of topological vortices, as well as the crossing events of vortices that correspond to a pair of commuting elements in $\pi_1(X)$ \cite{poenaru:1977}, a framework for investigating topological vortex configurations up to such core-topology-altering moves, \emph{topologically allowed local surgeries}, was considered in Ref.~\cite{annala:2022}. In physical terms, this scheme models the intermediate-energy evolution of a topological vortex configuration: even though altering the core-topology requires energy \cite{monastyrsky:1986}, the cost is small if it is performed inside a volume that is much smaller than the size of the vortex loops \cite{annala:2022}.

Under the assumption that $\pi_2(X) \cong 0$, which is satisfied, for instance, by several phases of spinor Bose--Einstein condensates \cite{kawaguchi:2012}, as well as biaxial nematic liquid crystal \cite{volovik:1977, mermin:1979}, the data of a topological vortex configuration may be faithfully presented by a $G$-colored link diagram, where $G := \pi_1(X)$ \cite{fox:1970, annala:2022}. The core-topology-conserving evolution corresponds to $G$-colored Reidemeister moves, and the topologically allowed local surgeries admit simple diagrammatic interpretations as well \cite{annala:2022}. For small groups $G$, this data admits a particularly simple description in terms of link diagrams in which each arc is colored according to a coloring scheme, which is a generalization of the well-known concept of a \emph{tricoloring} of a knot \cite{fox:1970,crowell:1977}, in which the three colors correspond to the three inversions $\sigma_{ij}$ in the symmetric group $\Sigma_3$.

In the previous work \cite{annala:2022}, $Q_8$-colored links were classified up to topologically allowed local surgeries: essentially, there exists only three non-trivial classes. One should contrast this simple classification with the complicated infinitude of all possible links. However, because the quaternion group $Q_8$ does not support topologically protected knots, all the topologically protected structures discovered thus far are links. 

Here, we show that there is no mathematical obstruction for the existence of topologically protected knots in general: the symmetric group $\Sigma_3$ does support topologically protected knots (Theorem~\ref{thm:nontrivtrefoil}). Moreover, we prove that, using topologically allowed local surgeries, any topologically protected tricolored link may be transformed into either a left-handed or a right-handed tricolored trefoil knot (Theorem~\ref{thm:tricoloclassification}). Again, the classification is not only finite, but also remarkably simple.

In order to investigate topological protection of tricolored knots, we introduce in Section~\ref{sect:cobinv}, which is the technical heart of the article, the \emph{bordism invariant} of $(G,S)$-colored links. Here, $G$ is a finite group and $S \subset G$ is a subset that is closed under inverses and conjugation; a $(G,S)$-colored link is such a $G$-colored link, that the elements of $G$ that appear in the colored link diagram belong to $S$. The case of tricolorings corresponds to $G = \Sigma_3$ and $S = I := \{\sigma_{12}, \sigma_{13}, \sigma_{23} \}$, where $\sigma_{ij}$ is the inversion swapping the $i^{th}$ and the $j^{th}$ element. For each $(G,S)$-colored link, one may construct a branched cover $M$ of $S^3$ on which the group $G$ acts. It has the property that the nontrivial stabilizers of the $G$-action are cyclic, and generated by an element of $S$. Moreover, the space $M$ admits a canonical structure of an oriented manifold, and the bordism invariant of the link is defined to be the class of $M$ in the third $G$-equivariant bordism group, with a stabilizer condition given by $S$. Such groups have been studied previously by Conner and Floyd \cite{conner-floyd:1964, conner-floyd:1966}. 

The bordism invariant satisfies many pleasant properties (Theorem~\ref{thm:consofcobinv}): it is additive in untangled disjoint unions of links, and takes value 0 on simple loops. Moreover, it is conserved in topologically allowed local reconnections. For tricolorings, all topologically allowed local surgeries may be expressed in terms of reconnections and tricolored Reidemeister moves, and therefore the bordism invariant may be employed to detect topological protection of a tricolored link. Even though we do not apply the bordism invariant in greater generality here, we believe it to provide a valuable tool for future efforts in the study of colored links.

\subsubsection*{Conventions}
Throughout the article, all links are tame. Unless otherwise stated, all manifolds are smooth and closed. Cyclic groups are denoted either by $C_n$ or $\Zb_n$, depending if they are considered as a group or an Abelian group. The symmetric group on $n$ letters is denoted by $\Sigma_n$.

\subsubsection*{Acknowledgments}

We thank Keegan Boyle, Cihan Okay, Ben Williams, and Roberto Zamora Zamora for interest in this work as well as useful discussions and questions. We have received funding from the European Research Council under Grant No. 681311 (QUESS) and from the Academy of Finland Centre of Excellence program (Project No. 336810), as well as from the Vilho, Yrjö and Kalle Väisälä Foundation of the Finnish Academy of Science and Letters. 

\section{Bordism invariants of colored links}\label{sect:cobinv}

Here, we construct bordism invariants for colored links, which we will employ in Section~\ref{sect:tricolor} to study topologically protected tricolorings. The idea behind the invariant is quite simple: for a $G$-colored link, we associate an oriented manifold $M$ with an action of the group $G$. We prove that the equivariant oriented bordism class of $M$ is conserved in topologically allowed local reconnections. Hence, we obtain an invariant of $G$-colored links that is conserved in topologically allowed local reconnections.

Let us recall necessary background \cite{fox:1970, annala:2022}. If $G$ is a finite group, then a \emph{$G$-colored link} is a pair $(\Lc,\psi)$, where $\Lc \subset S^3$ is a link and the \emph{$G$-coloring} $\psi$ is a group homomorphism $\pi_1(S^3 \backslash \Lc) \to G$. Two $G$-colorings $\psi, \psi': \pi_1(S^3 \backslash \Lc) \to G$ are considered \emph{equivalent} if $\psi'$ can be obtained from $\psi$ by conjugating it with an element of $G$. The equivalence classes of $G$-colorings $\psi$ on a given link $\Lc$ are in a bijective correspondence with the set of homotopy classes $[\Rb^3 \backslash \Lc, X]$, where $X$ is any path-connected space satisfying $\pi_1(X) \cong G$ and $\pi_2(X) \cong 0$ (e.g. the $X$ is the classifying space $\mathrm{B}G$ \cite{hatcher:2002}). In physical terms, a $G$-colored link captures the homotopical information of an $X$-valued order parameter field on the complement of the link $L$ in $\Rb^3$. The data of a $G$-colored link may be depicted by a $G$-colored link diagram (Fig.~\ref{fig:linkbasics}).

\begin{cons}[Branched covering $M(\Lc, \psi)$ of a colored link]\label{cons:M}
Let $(\Lc, \psi)$ be a $G$-colored link, and denote $U(\Lc) := S^3 \backslash \Lc$. Let $\tilde U(\Lc, \psi)$ be the principal $G$-bundle over $U(\Lc)$, with monodromy specified by $\psi$ \cite{hatcher:2002}. Note that $\tilde U(\Lc, \psi)$ depends only on the equivalence class of $\psi$. Denote by $M(\Lc, \psi)$ the unique completion of $\tilde U(\Lc, \psi)$ to a branched covering of $S^3$ \cite{fox:1957, fox:1970}. The space $M(\Lc, \psi)$ has a canonical structure of an oriented smooth manifold, and the group $G$ acts on it by orientation-preserving diffeomorphisms. An illustration of the local structure of the branched covering $M(\Lc, \psi) \to S^3$ is provided in Fig.~\ref{fig:bcover}.
\end{cons}

The stabilizers of the $G$-action on $M(\Lc, \psi)$ are cyclic.

\begin{lem}\label{lem:stabilizers1}
Let $(\Lc, \psi)$ be a colored link, and let $L$ be component of $\Lc$. Suppose that the (free) homotopy class of a meridian\footnote{A small loop that winds about $L$ once.} of $L$ maps to the conjugacy class of $g \in G$ under the $G$-coloring $\pi_1(S^3 \backslash \Lc) \to G$. Then the stabilizer groups of the points in the preimage of $L$ are conjugate to the cyclic group $\langle g \rangle$ generated by $g$.
\end{lem}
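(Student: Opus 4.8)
The plan is to reduce the statement to a local computation in a tubular neighborhood of $L$ and then to analyze the connected components of the restricted principal bundle. Since the link is tame, $L$ has a tubular neighborhood $D^2\times S^1$ with $L=\{0\}\times S^1$, and a meridian is realized as $\partial D^2\times\{\mathrm{pt}\}$. Because every element of $G$ preserves the branched covering $M(\Lc,\psi)\to S^3$ of Construction~\ref{cons:M}, it maps the fibre over a given point of $L$ to itself; hence the stabilizer computation localizes entirely to a single transverse slice. It therefore suffices to understand the punctured disk $W:=D^2\setminus\{0\}$, over which $\tilde U(\Lc,\psi)$ restricts to the principal $G$-bundle whose meridian monodromy is a representative $g$ of the prescribed conjugacy class (the longitude is invisible locally, since the transverse arc is contractible). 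By Fox's theory the completion $M(\Lc,\psi)$ adjoins, over the branch point $0$, one point for each connected component of this restricted bundle, and these points constitute the fibre over the corresponding point of $L$. Thus I only need to identify the components and compute the stabilizer, under the right $G$-action, of the point each one contributes.

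To do this I would trivialize the fibre over a basepoint as the $G$-torsor $G$ and describe the meridian monodromy as left translation $h\mapsto g^{-1}h$, as read off from the associated-bundle model $\tilde U(\Lc,\psi)|_W\cong \widetilde{W}\times_{\pi_1(W)}G$. The connected components then correspond to the orbits of this monodromy, i.e.\ to the cosets $\langle g\rangle h$ of the cyclic subgroup $\langle g\rangle$ acting by left translation; each such component is a $k$-fold connected cover of $W$, where $k$ is the order of $g$, and is therefore again an annulus mapping to $W$ by the local model $z\mapsto z^{k}$. The right $G$-action sends the component indexed by $\langle g\rangle h$ to the one indexed by $\langle g\rangle h k$, so an element $k\in G$ preserves this component precisely when $hkh^{-1}\in\langle g\rangle$, that is, when $k\in h^{-1}\langle g\rangle h=\langle h^{-1}gh\rangle$.

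It remains to promote the setwise stabilizer of a component to the stabilizer of the branch point it contributes. Here I would invoke the uniqueness of the Fox completion: an element of $G$ preserving a component restricts to a self-homeomorphism of that annulus, which extends uniquely over the single added central point and hence fixes it, while an element not preserving the component carries the central point to a distinct one. Consequently the stabilizer of the point indexed by $\langle g\rangle h$ is exactly $\langle h^{-1}gh\rangle$, which is conjugate to $\langle g\rangle$. Letting $h$ range over a transversal of $\langle g\rangle\backslash G$ enumerates the entire fibre over a point of $L$ and shows that every such stabilizer is conjugate to $\langle g\rangle$, refining the cyclicity observation stated before the lemma.

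I expect the principal obstacle to be bookkeeping the conventions consistently: pinning down whether the meridian acts by left or right translation (and hence whether the components are indexed by left or right cosets), and reconciling the basepoint- and orientation-dependence of the meridian with the fact that $\psi$ is only defined up to conjugation, so that only the conjugacy class of $g$, and thus only the conjugacy class of $\langle g\rangle$, is intrinsic. The geometric input is entirely standard once the local model is in place; the genuine care lies in verifying that the setwise component stabilizer really computes the pointwise stabilizer of the added branch point.
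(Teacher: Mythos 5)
Your argument is correct and is essentially a careful written-out version of the paper's proof, which consists entirely of the one-line remark ``This is evident from Fig.~\ref{fig:bcover}'': that figure depicts exactly your local model, with the components of the restricted cover indexed by cosets of $\langle g\rangle$ and the stabilizer of each added branch point identified with a conjugate of $\langle g\rangle$. Your treatment of the Fox completion step (setwise component stabilizer equals pointwise stabilizer of the adjoined point) and of the left/right coset conventions supplies precisely the detail the paper delegates to the picture.
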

\begin{proof}
This is evident from Fig.~\ref{fig:bcover}.
\end{proof}

\begin{defn}\label{def:cobinv}
Let $G$ be a finite group and $S \subset G$ be a subset of \emph{allowed stabilizers}, which is assumed to be closed under inverses and conjugation with elements of $G$. A $(G,S)$-colored link is a $G$-colored link $(\Lc, \psi)$ such that the $G$-coloring sends meridians of $\Lc$ to elements in $S$. Given a $(G,S)$-colored link $(\Lc, \psi)$, we associate to it the \emph{bordism invariant}
\begin{equation}
\Bfr_{G,S}(\Lc, \psi) := [M(\Lc, \psi)] \in \Omega^{\SO}_3(G, \Fs_S),
\end{equation}
where $\Omega^{\SO}_*(G, \Fs_S)$
denotes the group of isomorphism classes of closed oriented smooth $G$-manifolds $X$, such that the stabilizer subgroup of each point of $X$ belongs to $\Fs_S := \{\langle g^n \rangle \subset G \vert g \in S, n \in \Nb\}$\footnote{The role of the exponent $n$ is to verify that the family $\Fs_S$ is closed under inclusions, a condition assumed by Conner and Floyd \cite{conner-floyd:1966}.}, up to oriented $G$-equivariant bordisms, the stabilizers of which are in $\Fs_S$ \cite[Section 5]{conner-floyd:1966}.
\end{defn}

From now on, $G$ denotes a finite group, and $S \subset G$ is a subset that is closed under inverses and conjugation. Let us study the basic properties of the bordism invariant $\Bfr_{G,S}$.

\begin{defn}
 A link $\Lc \subset S^3$ is an \emph{untangled disjoint union} of links $\Lc_1, \Lc_2 \subset S^3$, denoted by 
\begin{equation}
\Lc = \Lc_1 \uplus \Lc_2,
\end{equation}
if $\Lc$ is a disjoint union of $\Lc_i \in S^3$, and the links $\Lc_i$ can be separated by a smooth embedded 2-sphere $\Omega \in S^3$. If $\Lc = \Lc_1 \uplus \Lc_2$, $\Omega$ is a separating two-sphere, and $x \in \Omega$, then van Kampen's theorem \cite{hatcher:2002} provides a canonical identification
\begin{equation}
\pi_1(S^3 \backslash \Lc, x) = \pi_1(S^3 \backslash \Lc_1, x) * \pi_1(S^3 \backslash \Lc_2,x),
\end{equation}
where $*$ stands for the free product of groups. A colored link $(\Lc, \psi)$ is an \emph{untangled disjoint union} of colored links $(\Lc_1, \psi_1)$ and $(\Lc_2, \psi_2)$, denoted by
\begin{equation}
(\Lc,\psi) = (\Lc_1, \psi_1) \uplus (\Lc_2, \psi_2),
\end{equation}
if $\Lc = \Lc_1 \uplus \Lc_2$, and, given a separating two-sphere $\Omega \subset S^3$,
\begin{equation}
\psi = \psi_1 * \psi_2
\end{equation}
as group homomorphisms $\pi_1(S^3 \backslash \Lc, x) \to G$.
\end{defn}

\begin{lem}\label{lem:addcobinv}
The bordism invariant $\Bfr_{G,S}(\Lc, \psi)$ is an \emph{additive invariant} in the sense that, if $(\Lc, \psi) = (\Lc_1, \psi_1) \uplus (\Lc_2, \psi_2)$, then
\begin{equation}
\Bfr_{G,S}(\Lc, \psi) = \Bfr_{G,S}(\Lc_1, \psi_1) + \Bfr_{G,S}(\Lc_2, \psi_2)
\end{equation}  
in $\Omega^\SO_3(G, \Fs_S)$.
\end{lem}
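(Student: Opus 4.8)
The plan is to show that the branched cover $M(\Lc,\psi)$ is oriented $G$-equivariantly bordant, through a bordism all of whose stabilizers lie in $\Fs_S$, to the disjoint union $M(\Lc_1,\psi_1) \sqcup M(\Lc_2,\psi_2)$. Since addition in $\Omega^\SO_3(G,\Fs_S)$ is realized by disjoint union, this is exactly the asserted identity $\Bfr_{G,S}(\Lc,\psi) = \Bfr_{G,S}(\Lc_1,\psi_1) + \Bfr_{G,S}(\Lc_2,\psi_2)$.

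First I would analyze the branched cover near the separating sphere. Let $\Omega \subset S^3$ be a separating two-sphere, bounding closed balls $B^-$ and $B^+$ with $\Lc_1 \subset \mathrm{int}(B^-)$ and $\Lc_2 \subset \mathrm{int}(B^+)$. Since $\Omega \cong S^2$ is simply connected and disjoint from $\Lc$, the restriction of $\psi$ to $\pi_1(\Omega)$ is trivial, so $\tilde U(\Lc,\psi)$ is trivial over a neighborhood of $\Omega$; hence the preimage of $\Omega$ in $M(\Lc,\psi)$ is $G \times S^2$ (that is, $\lvert G \rvert$ disjoint spheres freely permuted by $G$), with a $G$-invariant product neighborhood $G \times S^2 \times (-1,1)$. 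Writing $N_1,N_2$ for the preimages of $B^-,B^+$, we obtain a $G$-equivariant decomposition $M(\Lc,\psi) = N_1 \cup_{G \times S^2} N_2$. The hypothesis $\psi = \psi_1 * \psi_2$ together with the van Kampen identification shows that $N_i$ is precisely the branched cover of the corresponding ball determined by $\psi_i$; capping the trivial cover over the complementary ball then gives the identifications $M(\Lc_1,\psi_1) = N_1 \cup_{G \times S^2} (G \times D^3)$ and $M(\Lc_2,\psi_2) = N_2 \cup_{G \times S^2} (G \times D^3)$.

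With these identifications in hand, the disjoint union $M(\Lc_1,\psi_1) \sqcup M(\Lc_2,\psi_2)$ is exactly the result of equivariant surgery on $M(\Lc,\psi)$ along $G \times S^2$: removing the neck $G \times S^2 \times (-1,1)$ and gluing in $G \times (D^3 \sqcup D^3)$ separates $N_1$ from $N_2$ and caps each with $G \times D^3$. The trace of this surgery is the bordism I want; explicitly, I would set
\begin{equation}
W = \bigl(M(\Lc,\psi) \times [0,1]\bigr) \cup_{G \times S^2 \times (-1,1) \times \{1\}} \bigl(G \times D^3 \times (-1,1)\bigr),
\end{equation}
attaching an equivariant $3$-handle to $M(\Lc,\psi) \times \{1\}$ along the product neighborhood of $G \times S^2$, so that $\partial W$ equals $M(\Lc,\psi) \sqcup \bigl(M(\Lc_1,\psi_1) \sqcup M(\Lc_2,\psi_2)\bigr)$ up to orientation.

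It then remains to check that $W$ is an admissible bordism. Because $G$ acts freely on the factor $G$, every point of the attached handle has trivial stabilizer, while the stabilizers of $M(\Lc,\psi) \times [0,1]$ agree with those of $M(\Lc,\psi)$ and hence lie in $\Fs_S$; thus all stabilizers of $W$ lie in $\Fs_S$, as required. The $G$-action on $W$ is smooth and orientation-preserving, being so on each piece with an equivariant gluing. The one genuinely delicate point — the step I expect to require the most care — is the orientation bookkeeping: one must verify that the product neighborhood $G \times S^2 \times (-1,1)$ carries precisely the framing for which the surgery produces caps inducing the \emph{canonical} orientations of the $M(\Lc_i,\psi_i)$, rather than their reverses. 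I would settle this by tracking the canonical orientation of $M(\Lc,\psi)$ through the splitting $N_1 \cup_{G \times S^2} N_2$ and observing that both it and the $M(\Lc_i,\psi_i)$ inherit their orientations from the fixed orientation of $S^3$ via the orientation-preserving branched covering maps, so the orientations automatically match after capping.
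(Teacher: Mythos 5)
Your proof is correct, but it builds the bordism in a genuinely different way from the paper. The paper works \emph{downstairs}: it takes the pair-of-pants bordism $P$ from $S^3 \sqcup S^3$ to $S^3 \# S^3 \cong S^3$, removes the product link cobordism $I \times \Lc_1 \sqcup I \times \Lc_2$, extends the coloring over $\pi_1(P^\circ) \cong \pi_1(S^3\backslash\Lc_1) * \pi_1(S^3\backslash\Lc_2)$ by van Kampen, and then invokes Fox's completion of the resulting principal $G$-bundle to a branched cover of the four-manifold $P$. You work \emph{upstairs}: you trivialize the cover over the separating sphere $\Omega$ (using $\pi_1(S^2)=0$), split $M(\Lc,\psi)$ as $N_1 \cup_{G\times S^2} N_2$, identify each $M(\Lc_i,\psi_i)$ as $N_i$ capped off by $G\times D^3$, and realize the bordism as the trace of an equivariant surgery along $G\times S^2$, i.e., an equivariant $3$-handle attachment to $M(\Lc,\psi)\times I$. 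The resulting four-manifold is the same in both cases --- your handle is exactly the preimage of the $3$-handle in $P$, which is unbranched since it misses the link --- but the verifications differ in flavor: the paper's route leans on the branched-covering completion machinery in dimension four and keeps the coloring data front and center, while yours is more hands-on and self-contained, replacing that machinery with an explicit trivialization and handle attachment, at the cost of having to do the orientation and stabilizer bookkeeping by hand (which you do correctly: the handle carries only trivial stabilizers, and all orientations are pulled back from the fixed orientation of $S^3$).
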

\begin{proof}
If $\Lc = \Lc_1 \uplus \Lc_2$, then the manifold $S^3 \backslash \Lc$ is diffeomorphic to the connected sum $(S^3 \backslash \Lc_1) \# (S^3 \backslash \Lc_2)$. Let $P$ be the standard smooth oriented bordism from $S^3 \coprod S^3$ to $S^3 \# S^3 \cong S^3$ (a higher dimensional version of the \emph{pair of pants}), and $P^\circ = P \backslash (I \times \Lc_1 \coprod I \times \Lc_2)$ the induced oriented bordism from $(S^3 \backslash \Lc_1) \coprod (S^3 \backslash \Lc_2)$ to $S^3 \backslash \Lc$. By van Kampen's theorem and homotopy invariance, $\pi_1(P^\circ) \cong \pi_1(S^3 \backslash \Lc_1) * \pi_1(S^3 \backslash \Lc_2)$, and $\psi_1$ and $\psi_2$ define a group homomorphism $\tilde \psi: \pi_1(P^\circ) \to G$. Let $W$ be the completion of the principal $G$-bundle $W^\circ$ over $P^\circ$, defined by the group homomorphism $\tilde \psi$, to a branched cover over $P$. The total space $W$ has a natural structure of an oriented smooth manifold with boundary. The group $G$ acts on it by orientation preserving diffeomorphisms, and therefore $W$ an equivariant oriented bordism from $M(\Lc_1, \psi_1) \coprod M(\Lc_2, \psi_2)$ to $M(\Lc, \psi)$. The non-trivial stabilizers of the $G$-action on $W$ belong to $\Fs_S$, so $W$ witnesses the desired equality.
\end{proof}

In the light of the above additivity property, in order to establish the triviality of the bordism invariants of trivial colored links, it is enough to compute the invariants of simple loops.

\begin{lem}\label{lem:trivcobinv}
If $\Lc$ consists of a single unknotted loop, then 
\begin{equation}
\Bfr_{G,S}(\Lc, \psi) = 0
\end{equation}
in $\Omega^\SO_3(G, \Fs_S)$.
\end{lem}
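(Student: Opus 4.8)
The plan is to realize $M(\Lc, \psi)$ as the equivariant boundary of a compact oriented $G$-manifold all of whose stabilizers lie in $\Fs_S$, by repeating Construction~\ref{cons:M} one dimension higher---over $D^4$ in place of $S^3$. This mirrors the bordism $W$ produced in the proof of Lemma~\ref{lem:addcobinv}, with $D^4$ playing the role of the pair of pants $P$.

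First I would choose a filling of the loop. Viewing $S^3 = \partial D^4$, the unknotted loop $\Lc$ bounds a smoothly and properly embedded disk $\Delta \subset D^4$ (concretely, push a spanning disk of $\Lc$ into the interior, or take the cone of $\Lc$ at the center of $D^4$), with $\partial \Delta = \Lc$. Because $\Lc$ is unknotted, the inclusion $S^3 \backslash \Lc \hook D^4 \backslash \Delta$ induces an isomorphism $\pi_1(S^3 \backslash \Lc) \isomto \pi_1(D^4 \backslash \Delta) \cong \Zb$ carrying the meridian of $\Lc$ to a meridian of $\Delta$; indeed, for the cone filling $D^4 \backslash \Delta$ even deformation retracts onto $S^3 \backslash \Lc$. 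Consequently the coloring $\psi$ extends uniquely through this isomorphism to a homomorphism $\tilde\psi \colon \pi_1(D^4 \backslash \Delta) \to G$.

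Next I would form the principal $G$-bundle $W^\circ$ over $D^4 \backslash \Delta$ classified by $\tilde\psi$ and let $W$ be its Fox completion to a branched cover of $D^4$ branched along $\Delta$, exactly as in Construction~\ref{cons:M}. Then $W$ is a compact oriented smooth $4$-manifold carrying an orientation-preserving $G$-action, and, since restricting everything to $\partial D^4 = S^3$ recovers the branched cover of $S^3$ over $\partial\Delta = \Lc$ determined by $\psi$, we obtain $\partial W \cong M(\Lc,\psi)$ as oriented $G$-manifolds. The stabilizer condition is then checked exactly as in Lemma~\ref{lem:stabilizers1}: the action is free away from $\Delta$, while along the branch disk the stabilizers are cyclic and generated by a conjugate of $\tilde\psi(m) = \psi(m) \in S$, hence lie in $\Fs_S$. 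Thus $W$ is an oriented $G$-equivariant bordism with stabilizers in $\Fs_S$ from $M(\Lc,\psi)$ to the empty manifold, giving $\Bfr_{G,S}(\Lc,\psi) = [M(\Lc,\psi)] = 0$.

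The step I expect to be the main obstacle is making the four-dimensional Fox completion precise and handling the orientation bookkeeping, i.e.\ verifying that $\partial W$ is genuinely $M(\Lc,\psi)$ with its canonical orientation rather than its reverse. I would sidestep the delicate parts by computing $M(\Lc,\psi)$ outright: since $\pi_1(S^3\backslash\Lc)\cong\Zb$ is generated by the meridian $m$ with $\psi(m)=g$, the cover $M(\Lc,\psi)$ is a disjoint union of $[G:\langle g\rangle]$ copies of the $\mathrm{ord}(g)$-fold cyclic branched cover of $S^3$ along the unknot, each of which is again $S^3$. The group $G$ permutes these copies, each $S^3$ bounds a $D^4$, and filling them in yields $W = \coprod D^4$ with the induced $G$-action, which manifestly provides the desired bordism with stabilizers in $\Fs_S$.
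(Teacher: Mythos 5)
Your proposal is correct and, in its final ``sidestep'' form, is essentially the paper's own proof: the paper likewise identifies $M(\Lc,\psi)$ with $G\times_{C_n}S^3$, where $S^3$ is the $n$-fold cyclic branched cover of the unknot and $n$ is the order of $g=\psi(1)$, and bounds it by $G\times_{C_n}\Db^4$ after noting that the $C_n$-action on $S^3$ extends over $\Db^4$. Your primary construction (the Fox-completed branched cover of $D^4$ over a spanning disk) yields the same filling and has the merit of making that extension automatic; the one point you should state explicitly in the disjoint-union formulation is that the cyclic action on each boundary $S^3$ does extend over the bounding $D^4$, since ``filling in'' a $G$-action is not automatic in general.
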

\begin{proof}
By hypothesis, $\pi_1(S^3 \backslash \Lc) \cong \Zb$. Let $g = \psi(1) \in G$, and let $n$ be the order of $g$ in $G$. The $n$-fold cyclic cover $X$ of $S^3$, branched over $\Lc$, is diffeomorphic to $S^3$. Moreover, as the $C_n$-action on $X$ extends to a $C_n$-action on $\Db^4$, $X$ bords as a $C_n$-manifold. Moreover, $M(\Lc, \psi) \cong G \times_{C_n} X$ bords $G \times_{C_n} \Db^4$, and the non-trivial stabilizers of $\Db^4 \times_{C_n} G$ belong to $\Fs_S$, proving the desired equality\footnote{Let $H$ be a subgroup of $G$, and let $X$ be a $H$-space. Then $G \times_H X$ denotes the $G$ space obtained from the space of pairs $(g, x) \in G \times X$ with identification $(gh, x) \sim (g, h.x)$.}. 
\end{proof}

Next, we establish the conservation of the bordism invariant $\Bfr_{G,S}(\Lc, \psi)$ in topologically allowed local reconnections.

\begin{lem}\label{lem:reconncobinv}
Let $(\Lc, \psi)$ and $(\Lc', \psi')$ be $(G,S)$-colored links which can be transformed into each other by a single topologically allowed local reconnection (Fig.~\ref{fig:linkbasics}(c)). Then
\begin{equation}
\Bfr_{G,S}(\Lc, \psi) = \Bfr_{G,S}(\Lc', \psi')
\end{equation}
in $\Omega^\SO_3(G, \Fs_S)$.
\end{lem}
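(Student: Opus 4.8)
The plan is to imitate the proof of the additivity Lemma~\ref{lem:addcobinv} by producing an explicit equivariant oriented bordism between $M(\Lc, \psi)$ and $M(\Lc', \psi')$, this time carried by the product $S^3 \times [0,1]$ rather than by a pair of pants. First I would realize the reconnection as a smooth oriented \emph{link cobordism}: a properly embedded surface $\Sigma \subset S^3 \times [0,1]$ with $\partial \Sigma = \Lc \times \{0\} \,\coprod\, \Lc' \times \{1\}$. Outside the small ball $B$ in which the reconnection takes place, the move does nothing, so there $\Sigma$ is the product $(\Lc \cap (S^3 \backslash B)) \times [0,1]$; inside $B \times [0,1]$, the surface $\Sigma$ is a single saddle, i.e.\ the trace of the band surgery converting the two incoming strands into the two outgoing ones. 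Because a topologically allowed reconnection respects the orientations of the strands, this saddle can be oriented compatibly, so $\Sigma$ is an oriented surface.

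Next I would extend the colorings across the cobordism. Writing $V^\circ := (S^3 \times [0,1]) \backslash \Sigma$, van Kampen's theorem, applied to the decomposition of $V^\circ$ into the product part and the local saddle model inside $B \times [0,1]$, expresses $\pi_1(V^\circ)$ in terms of the meridians of $\Sigma$. The decisive point is that a reconnection is \emph{topologically allowed} precisely when the two reconnected strands carry the same color $g \in S$; this is exactly the condition needed for the meridian of the saddle band to be the well-defined element $g$, and for the Wirtinger-type relations produced at the saddle to be satisfied in $G$. Consequently $\psi$ and $\psi'$ glue to a single homomorphism $\tilde\psi \colon \pi_1(V^\circ) \to G$ restricting to $\psi$ and $\psi'$ on the two boundary components.

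With $\tilde\psi$ in hand, I would complete the principal $G$-bundle $\tilde W^\circ$ over $V^\circ$ determined by $\tilde\psi$ to a branched covering $W \to S^3 \times [0,1]$, branched along $\Sigma$, exactly as in Construction~\ref{cons:M}. The total space $W$ then carries a canonical structure of a compact oriented smooth $G$-manifold with boundary, on which $G$ acts by orientation-preserving diffeomorphisms, and by construction $\partial W = M(\Lc, \psi) \,\coprod\, \overline{M(\Lc', \psi')}$ with the induced orientations. The analysis of Lemma~\ref{lem:stabilizers1}, applied to each meridian of $\Sigma$, shows that every non-trivial stabilizer of the $G$-action on $W$ is generated by an element of $S$ and hence lies in $\Fs_S$. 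Thus $W$ is an admissible equivariant oriented bordism, and it witnesses the equality $\Bfr_{G,S}(\Lc, \psi) = \Bfr_{G,S}(\Lc', \psi')$ in $\Omega^\SO_3(G, \Fs_S)$.

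I expect the main obstacle to be the middle step: verifying cleanly that $\tilde\psi$ is well defined, i.e.\ that the relations imposed on $\pi_1(V^\circ)$ by the saddle are consistent with the common color $g$ of the reconnected strands. This requires pinning down the local model of the saddle and its meridians carefully and matching it against the precise diagrammatic definition of a topologically allowed reconnection. By comparison, the orientation bookkeeping on $\partial W$ and the smoothness of the branched completion near the saddle critical point are routine.
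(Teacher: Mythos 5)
Your proposal follows essentially the same route as the paper: realize the reconnection as a saddle cobordism inside $D \times I \subset S^3 \times I$, extend the coloring over the complement to a homomorphism $\tilde\psi$, complete the associated principal $G$-bundle to a branched cover $W \to S^3 \times I$, and observe that $W$ is an oriented equivariant bordism whose stabilizers lie in $\Fs_S$. The one step you flag as the main obstacle --- well-definedness of $\tilde\psi$ --- is dispatched in the paper without any Wirtinger bookkeeping at the saddle: the complement of the saddle in $D \times I \cong \Db^4$ deformation retracts onto the complement of its boundary circle in $\partial(D \times I) \simeq S^3$ (the pair being the standard $(\Db^4, \Db^2)$), so the homomorphism assembled on the boundary from $\psi$ and $\psi'$ via van Kampen extends automatically over the whole cobordism complement.
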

\begin{proof}
We will construct an equivariant oriented bordism between $M(\Lc, \psi)$ and $M(\Lc', \psi')$.

Let $D := \Db^3 \subset S^3$ be a small three-disk inside which the reconnection takes place, and let $C \subset D \times I$ be the saddle bordism depicted in Fig.~\ref{fig:saddlecob}. Then 
\begin{equation}
\Uc := \big(S^3 \backslash (\mathrm{Int}(D) \cup \Lc)\big) \times I \cup (D \times I \backslash C)
\end{equation}
is an oriented bordism between the complements of $\Lc$ and $\Lc'$ in $S^3$. We claim that the group homomorphism 
\begin{equation}
\psi^\circ : \pi_1\big(\big(S^3 \backslash (\mathrm{Int}(D) \cup \Lc)\big) \times I\big) = \pi_1\big(S^3 \backslash (\mathrm{Int}(D) \cup \Lc)\big) \to G
\end{equation}
extends to a group homomorphism
\begin{equation}
\tilde \psi: \pi_1(\Uc) \to G 
\end{equation}
satisfying
\begin{align}
\tilde \psi\vert_{(S^3 \backslash \Lc) \times \{0\}} &= \psi \\
\tilde \psi\vert_{(S^3 \backslash \Lc') \times \{1\}} &= \psi'.
\end{align}
Indeed, the restrictions of $\psi$ and $\psi'$ to $D \backslash \Lc$ and $D \backslash \Lc'$, respectively, allow us, by van Kampen's theorem, to construct a homomorphism
\begin{equation}
\bar \psi: \pi_1\Big(\big(S^3 \backslash (\mathrm{Int}(D) \cup \Lc)\big) \times I \cup (D \backslash \Lc) \times \{0\} \cup (D \backslash \Lc') \times \{1\}\Big)  \to G 
\end{equation}
restricting to the desired homomorphisms at $0$ and $1$. The complement of the saddle $C$ in $\partial D \times I \cup D \times \{0\} \cup D \times \{1\} \simeq S^3$ is a deformation retract\footnote{A \emph{deformation retract} is a (necessarily injective) continuous map $i: X \to Y$ such that there exists a continuous map $r: Y \to X$ satisfying $r \circ i = \Id_X$ (such an $r$ is called a \emph{retraction}), such that $i \circ r$ is homotopic to $\Id_Y$ relative to $X$.} of the complement of $S$ in $D \times I \cong \Db^4$, because the inclusion is homeomorphic to the standard inclusion of $\partial \Db^4 \backslash \partial \Db^2 \hook  \Db^4 \backslash \Db^2$. Hence,  we get a homotopical  identification
\begin{equation}
\big(S^3 \backslash (\mathrm{Int}(D) \cup \Lc)\big) \times I \cup (D \backslash \Lc) \times \{0\} \cup (D \backslash \Lc') \times \{1\} \simeq \Uc.
\end{equation}
As a homotopy equivalence induces an isomorphism on fundamental groups, we have found the desired group homomorphism $\tilde \psi$.

Denote by $\tilde \Uc$ the $G$-principal bundle over $\Uc$ associated to the group homomorphism $\tilde \psi$. It completes to a unique branched covering $W$ of $S^3 \times I$ \cite{fox:1957, fox:1970}. The space $W$ has a canonical structure of an oriented smooth manifold with a boundary: it is an oriented bordism between $M(\Lc, \psi)$ and $M(\Lc', \psi')$. Moreover, the  group $G$ acts on $W$ it by orientation-preserving diffeomorphisms, and this action restricts to the usual actions on $M(\Lc, \psi)$ and $M(\Lc', \psi')$. As the stabilizers of $W$ belong to $\Fs_S$, $W$ realizes the desired equality in the equivariant bordism group $\Omega^\SO_3(G, \Fs_S)$.
\end{proof}

The results of this section may be concisely summarized as the following theorem.

\begin{thm}\label{thm:consofcobinv}
Let $G$ be a finite group and $S \subset G$ be a subset that is closed under inverses and conjugation by elements of $G$. Then the bordism invariant $\Bfr_{G,S}$ satisfies the following properties:
\begin{enumerate}
\item \emph{conservation in reconnections:} if $(\Lc, \psi)$ and $(\Lc', \psi')$ are $(G,S)$-colored links that can be transformed into each other by a sequence of color respecting smooth isotopies\footnote{Diagrammatically, these can be expressed as sequences of colored Reidemeister moves \cite{rolfsen:2003, annala:2022}.} and topologically allowed local reconnections, then $\Bfr_{G,S}(\Lc, \psi) = \Bfr_{G,S}(\Lc', \psi')$;

\item \emph{annihilation of trivial links:} if $(\Lc, \psi)$ is a $(G,S)$-colored link such the link structure of $\Lc$ is trivial, then $\Bfr_{G,S}(\Lc, \psi) = 0$;

\item \emph{additivity:} $\Bfr_{G,S}\big( (\Lc_1, \psi_1) \uplus (\Lc_2, \psi_2) \big) = \Bfr_{G,S}( \Lc_1, \psi_1) +  \Bfr_{G,S}(\Lc_2, \psi_2)$. 
\end{enumerate}
\end{thm}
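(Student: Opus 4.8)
The plan is to assemble the theorem from the three lemmas established above, together with one additional observation concerning color-respecting isotopies. Property~(3) is precisely Lemma~\ref{lem:addcobinv} restated, so nothing further is needed there. For property~(2), I would use that a link with trivial link structure is an untangled disjoint union $\Lc = \Lc_1 \uplus \cdots \uplus \Lc_k$ of unknotted loops; since the complement is then a free product of the individual complements, the coloring $\psi$ splits as $\psi_1 * \cdots * \psi_k$ accordingly. Applying the additivity of Lemma~\ref{lem:addcobinv} iteratively expresses $\Bfr_{G,S}(\Lc, \psi)$ as the sum $\sum_{i} \Bfr_{G,S}(\Lc_i, \psi_i)$, and each summand vanishes by Lemma~\ref{lem:trivcobinv}; hence the total vanishes in $\Omega^\SO_3(G, \Fs_S)$.

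For property~(1), the reconnection case is handled directly by Lemma~\ref{lem:reconncobinv}: a single topologically allowed local reconnection preserves the class, and therefore so does any finite sequence of them. The remaining ingredient is invariance of $\Bfr_{G,S}$ under color-respecting smooth isotopies. Here I would use that an ambient isotopy $h_t : S^3 \to S^3$ with $h_0 = \Id$ and $h_1(\Lc) = \Lc'$ carries the coloring $\psi$ to a coloring equivalent to $\psi'$, and that the branched-cover construction of Construction~\ref{cons:M} is natural: since the isotopy respects colors, the induced homomorphisms on $\pi_1$ of the complements agree under the identification furnished by $h_t$, so the associated principal $G$-bundles, and hence their canonical Fox completions, are carried to one another. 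Consequently $h_1$ lifts to an orientation-preserving $G$-equivariant diffeomorphism $M(\Lc, \psi) \isomto M(\Lc', \psi')$. Such a diffeomorphism of closed oriented $G$-manifolds with stabilizers in $\Fs_S$ identifies their classes, giving $\Bfr_{G,S}(\Lc, \psi) = \Bfr_{G,S}(\Lc', \psi')$. Interleaving the isotopy and reconnection cases along a common sequence then yields property~(1) in full.

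The step I expect to require the most care is the isotopy invariance just sketched, as it is the only part not already packaged by a preceding lemma. Concretely, one must check that the lift of $h_1$ to the branched covers is well-defined and orientation-preserving; equivalently, that the trace of the family $h_t$ induces a product $G$-equivariant bordism from $M(\Lc, \psi)$ to $M(\Lc', \psi')$ with stabilizers in $\Fs_S$. Because the completion of Fox is canonical and the bundle depends only on the equivalence class of the coloring (as already noted in Construction~\ref{cons:M}), transporting the bundle along the color-respecting isotopy makes this lift automatic. Once this naturality is in place, the three properties follow exactly as described, completing the proof.
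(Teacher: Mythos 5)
Your proposal is correct and matches the paper's approach: the theorem is stated there as a summary of Lemmas~\ref{lem:addcobinv}, \ref{lem:trivcobinv}, and \ref{lem:reconncobinv}, assembled exactly as you describe (iterated additivity plus vanishing on unknots for property~(2), and the reconnection lemma for property~(1)). Your explicit treatment of isotopy invariance via naturality of the branched-cover construction fills in a step the paper leaves implicit, and it is sound.
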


\section{Classification of topologically protected tricolorings}\label{sect:tricolor}

Here, we investigate the topological stability of tricolored links. Let $\Sigma_3$ denote the group of permutations of three elements, and let $I := \{ \sigma_{12}, \sigma_{13}, \sigma_{23} \} \subset \Sigma_3$ be the set of inversions, where $\sigma_{ij}$ denotes the permutation that swaps the $i^{th}$ and the $j^{th}$ element. 

\begin{defn}[cf. \cite{fox:1970}]\label{def:tricolo}
A \emph{tricolored link} is a $(\Sigma_3, I)$-colored link in the sense of Definition \ref{def:cobinv}. The data of a $(\Sigma_3, I)$-colored link diagram admits a graphical presentation, as a link diagram in which each arc is colored with either red, gray, or blue color, as explained in Fig.~\ref{fig:tricolorules}(a) and (b).
\end{defn}

\begin{defn}[cf. \cite{annala:2022}]
Two tricolored links are \emph{topologically equivalent}, if one of them can be transformed into the other using smooth isotopies and \emph{topologically allowed local surgeries} (Fig.~\ref{fig:tricolorules}(c) and (d)). A tricolored link is \emph{topologically trivial} if it is topologically equivalent to an untangled disjoint union of simple loops. A tricolored link that is not topologically trivial is \emph{topologically protected}. An example of a topologically trivial knot is  depicted in Fig.~\ref{fig:tricoloex}(a).
\end{defn}

The existence of topologically protected tricolored links is a non-trivial question: indeed, in order to establish the topological protection even of a single example, one needs to be able to detect a topologically protected knot. For this purpose we employ the bordism invariant developed in Section \ref{sect:cobinv}.

\begin{defn}
Let $(\Lc, \psi)$ be a tricolored knot. We define its \emph{$i$-invariant} as 
\begin{equation}
i(\Lc, \psi) := [M(\Lc, \psi)] \in \Omega^\SO_3(C_3, *),
\end{equation}
where $*$ is the family of subgroups of $C_3$ consisting of the trivial subgroup $\{e\}$. In other words, we restrict the $\Sigma_3$-action on $M(\Lc, \psi)$ into a $C_3$-action, and the resulting action is free because the only non-trivial stabilizers of the original $\Sigma_3$-action are the subgroups which are generated by a single inversion $\sigma_{ij}$.
\end{defn}

\begin{lem}\label{lem:isurgery}
The $i$-invariant is conserved in topologically allowed local surgeries.
\end{lem}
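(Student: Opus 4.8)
The plan is to reduce the statement to what has already been established. Recall that the $i$-invariant is by definition $i(\Lc,\psi) = [M(\Lc,\psi)] \in \Omega^{\SO}_3(C_3, *)$, obtained by restricting the $\Sigma_3$-action on $M(\Lc,\psi)$ to the subgroup $C_3 \subset \Sigma_3$. The crucial observation is that topologically allowed local surgeries on tricolored links decompose into two types of moves: color-respecting isotopies (equivalently, tricolored Reidemeister moves) and topologically allowed local reconnections. For the bordism invariant $\Bfr_{\Sigma_3,I}$ valued in $\Omega^{\SO}_3(\Sigma_3,\Fs_I)$, conservation under both types of moves is exactly the content of Lemma~\ref{lem:reconncobinv} and the isotopy-invariance packaged into Theorem~\ref{thm:consofcobinv}(1). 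So the heart of the argument is to show that these same bordisms, when the $\Sigma_3$-action is restricted to $C_3$, become \emph{free} equivariant bordisms witnessing equality in $\Omega^{\SO}_3(C_3,*)$.

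Concretely, I would proceed as follows. First, observe that for a tricolored link all meridians map into $I$, the set of inversions, so by Lemma~\ref{lem:stabilizers1} every nontrivial stabilizer of the $\Sigma_3$-action on $M(\Lc,\psi)$ is generated by some inversion $\sigma_{ij}$, i.e. is a subgroup of order $2$. Since $C_3 \cap \langle \sigma_{ij}\rangle = \{e\}$ for every inversion, restricting the action to $C_3$ yields a \emph{free} $C_3$-action on $M(\Lc,\psi)$; this is precisely the freeness noted in the definition of the $i$-invariant. Second, I would take the equivariant oriented bordism $W$ constructed in the proof of Lemma~\ref{lem:reconncobinv} (for a reconnection) between $M(\Lc,\psi)$ and $M(\Lc',\psi')$. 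That $W$ is built as the branched cover of $S^3 \times I$ associated to a homomorphism $\tilde\psi : \pi_1(\Uc) \to \Sigma_3$, and its nontrivial $\Sigma_3$-stabilizers again all lie in $\Fs_I$, hence are generated by inversions.

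The key step is then to check that restricting the $\Sigma_3$-action on $W$ to $C_3$ produces a \emph{free} $C_3$-bordism. Since every nontrivial stabilizer subgroup of $W$ is cyclic of order two generated by an inversion, and such a subgroup meets $C_3$ only in the identity, the induced $C_3$-action on $W$ has no nontrivial stabilizers; that is, $W$ is a free $C_3$-manifold with boundary. Hence $W$ is an oriented bordism in the family $*$ (the trivial family) between the free $C_3$-manifolds $M(\Lc,\psi)$ and $M(\Lc',\psi')$, establishing $i(\Lc,\psi) = i(\Lc',\psi')$ in $\Omega^{\SO}_3(C_3,*)$. For isotopies (color-respecting smooth isotopies / tricolored Reidemeister moves) the same reasoning applies to the corresponding product-type bordisms, or one simply invokes that $M(\Lc,\psi)$ is unchanged up to equivariant diffeomorphism. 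Finally, since every topologically allowed local surgery on a tricolored link is a composite of reconnections and color-respecting isotopies, conservation under a single such surgery follows, and the general claim follows by induction on the number of moves.

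I expect the main obstacle to be purely a matter of \emph{bookkeeping of stabilizers} rather than any deep topology: one must verify carefully that no new stabilizer subgroup — in particular none of order three or order six — can appear anywhere on the bordism $W$. This hinges on the fact that the allowed stabilizer set is $I$ (inversions only), so that every local branching of the cover is dihedral-free in the relevant sense and $C_3$ acts freely. The remaining points — that $W$ is genuinely a free $C_3$-bordism, that orientations restrict correctly, and that the decomposition of surgeries into reconnections and isotopies is the one already set up in the cited references — are routine given the construction in Lemma~\ref{lem:reconncobinv} and the conventions of Conner and Floyd for bordism with a family of allowed stabilizers.
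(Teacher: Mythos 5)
Your proposal is correct and follows essentially the same route as the paper: reduce all topologically allowed local surgeries to isotopies and reconnections (Fig.~\ref{fig:tricolorules}(d)), then observe that the $i$-invariant is the image of $\Bfr_{\Sigma_3,I}$ under the restriction-of-action map $\Omega^\SO_3(\Sigma_3,\Fs_I) \to \Omega^\SO_3(C_3,*)$, which is well defined because inversions intersect $C_3$ trivially. Your explicit check that the bordism $W$ of Lemma~\ref{lem:reconncobinv} becomes a free $C_3$-bordism is exactly the content the paper leaves implicit in invoking that restriction map.
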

\begin{proof}
In fact, all topologically allowed local surgeries of tricolored links can be expressed as a sequence of smooth isotopies and topologically allowed local reconnections (Fig.~\ref{fig:tricolorules}). As $i(\Lc, \psi)$ is the image of the bordism invariant $\Bfr_{\Sigma_3,I}(\Lc, \psi)$ under the restriction-of-action map $\Omega^\SO_3(\Sigma_3, I) \to \Omega^\SO_3(C_3, *)$, the claim follows from the conservation of the latter in topologically allowed local reconnections (Theorem~\ref{thm:consofcobinv}).
\end{proof}

Let us compute the target group of the $i$-invariant. We start by recording the following observation.

\begin{lem}\label{lem:orcobtohom}
Let $G$ be a finite group and $X$ a space that has the homotopy type of a CW complex. Then the map $\Omega^\SO_i(X) \to H_i(X; \Zb)$ defined by
\begin{equation}
[M \xto{f} X] \mapsto f_*([M]), 
\end{equation}
where $[M] \in H_i(M; \Zb)$ is the fundamental class of the oriented manifold $M$, is an isomorphism in degrees $i \leq 3$.
\end{lem}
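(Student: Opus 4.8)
The plan is to recognize the displayed map as the canonical natural transformation from oriented bordism, viewed as a generalized homology theory, to ordinary integral homology, and then to exploit the vanishing of the bordism coefficient groups in the relevant low degrees. Recall that $X \mapsto \Omega^\SO_*(X)$ (bordism classes of singular oriented manifolds $f \colon M \to X$) is the homology theory represented by the Thom spectrum $\mathrm{M}\SO$, and that the Thom class furnishes a map of spectra $\mathrm{M}\SO \to \mathrm{H}\Zb$ inducing the identity $\Omega^\SO_0(\mathrm{pt}) = \Zb \to \Zb = H_0(\mathrm{pt};\Zb)$. The resulting natural transformation $\Omega^\SO_*(X) \to H_*(X;\Zb)$ is precisely the edge homomorphism of the Atiyah--Hirzebruch spectral sequence, and on a representative it is given by $[M \xto{f} X] \mapsto f_*([M])$. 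Verifying this last identification is the one genuinely structural point; it is standard, and I would cite it rather than reprove it.

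The key input is Thom's computation of the oriented bordism ring in low degrees,
\[
\Omega^\SO_0(\mathrm{pt}) = \Zb, \qquad \Omega^\SO_1(\mathrm{pt}) = \Omega^\SO_2(\mathrm{pt}) = \Omega^\SO_3(\mathrm{pt}) = 0,
\]
the next nonzero group being $\Omega^\SO_4(\mathrm{pt}) \cong \Zb$, generated by $\CP^2$ via the signature. With this in hand I would run the Atiyah--Hirzebruch spectral sequence
\[
E^2_{p,q} = H_p\big(X; \Omega^\SO_q(\mathrm{pt})\big) \ \Longrightarrow\ \Omega^\SO_{p+q}(X),
\]
which converges because $X$ has the homotopy type of a CW complex.

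Now I would carry out the degree count. In total degree $n := p+q \leq 3$ the only possibly nonzero entry is $E^2_{n,0} = H_n(X;\Zb)$: for $1 \leq q \leq 3$ the coefficient group vanishes, while for $q \geq 4$ one would need $p = n - q < 0$. Since the $q=0$ row carries the entire abutment in this range, there is no extension problem, and it only remains to check that no differential enters or leaves the spot $(n,0)$. A differential $d_r$ into $(n,0)$ would originate at $(n+r,\,1-r)$, whose second index is negative for $r \geq 2$; a differential out of $(n,0)$ would land at $(n-r,\,r-1)$, which is nonzero only if $\Omega^\SO_{r-1}(\mathrm{pt}) \neq 0$, forcing $r \geq 5$ and hence $n - r < 0$. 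Thus $E^\infty_{n,0} = E^2_{n,0} = H_n(X;\Zb)$, the spectral sequence collapses in total degree $\leq 3$, and the edge homomorphism is an isomorphism $\Omega^\SO_n(X) \isomto H_n(X;\Zb)$ for $n \leq 3$.

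I expect no real obstacle beyond the identification of the abstract edge map with the geometric assignment $f \mapsto f_*([M])$, and even this can be sidestepped: since $\Omega^\SO_*$ and $H_*(-;\Zb)$ are both homology theories on CW complexes and $[M \xto{f} X] \mapsto f_*([M])$ is a natural transformation between them that is an isomorphism on the coefficients (a point) in degrees $\leq 3$, a routine induction over the cells of $X$ — equivalently, a comparison of the two Atiyah--Hirzebruch spectral sequences, the target one being concentrated in the row $q=0$ — upgrades it to an isomorphism for every such $X$ in those degrees. Either route reduces the statement to the bordism coefficients being concentrated in degree $0$ within the range, which is exactly Thom's theorem quoted above.
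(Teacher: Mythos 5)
Your argument is correct and is exactly the paper's approach: the paper's proof simply invokes the Atiyah--Hirzebruch spectral sequence together with Thom's computation of the low-degree oriented bordism groups of a point, which is precisely the degree count you carry out in detail. Your additional remarks on identifying the edge homomorphism with the geometric map $[M \xto{f} X] \mapsto f_*([M])$ fill in a point the paper leaves implicit, but do not change the route.
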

\begin{proof}
The claim follows immediately from the Atiyah--Hirzebruch spectral sequence and from the computation of the oriented bordism group of a point.
\end{proof}

\begin{lem}\label{lem:invinZ3}
We have
\begin{equation}
\Omega^\SO_3(C_3, *) \cong H_3(C_3;\Zb) \cong \Zb_3
\end{equation}
where $H_i(C_3, \Zb)$ denotes the \emph{group homology} of $C_3$ with integer coefficients.
\end{lem}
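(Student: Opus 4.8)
The plan is to identify the equivariant bordism group $\Omega^\SO_3(C_3, *)$ with an ordinary oriented bordism group of a classifying space, and then to reduce to group homology via Lemma~\ref{lem:orcobtohom}. The key observation is that the family $*$ consists only of the trivial subgroup, so the $G$-manifolds being classified are precisely the \emph{free} oriented $C_3$-manifolds, together with free oriented bordisms between them. First I would invoke the standard classification of free $G$-actions: a free oriented $C_3$-manifold $X$ of dimension $3$ is the same data as an oriented $3$-manifold $X/C_3$ equipped with a principal $C_3$-bundle, and such a bundle is classified by a map $X/C_3 \to \mathrm{B}C_3$. This is exactly the quotient construction, and it is compatible with bordisms: a free oriented $C_3$-bordism descends to an oriented bordism of the quotients together with a compatible map to $\mathrm{B}C_3$. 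Hence I would claim a natural isomorphism
\begin{equation}
\Omega^\SO_3(C_3, *) \cong \Omega^\SO_3(\mathrm{B}C_3),
\end{equation}
sending a free $C_3$-manifold $X$ to its quotient $X/C_3$ together with the classifying map of the covering $X \to X/C_3$.

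Once this identification is in place, the rest is a direct application of results already available. By Lemma~\ref{lem:orcobtohom}, applied to $G = C_3$ and $X = \mathrm{B}C_3$ (which has the homotopy type of a CW complex), the Hurewicz-type map $\Omega^\SO_3(\mathrm{B}C_3) \to H_3(\mathrm{B}C_3;\Zb)$ is an isomorphism in degree $3$. Since $\mathrm{B}C_3$ is an Eilenberg--MacLane space $K(C_3,1)$, its singular homology coincides with the group homology of $C_3$, so $H_3(\mathrm{B}C_3;\Zb) \cong H_3(C_3;\Zb)$. Finally I would compute $H_3(C_3;\Zb)$ using the standard periodic resolution for a finite cyclic group: the integral homology of $C_n$ is $\Zb$ in degree $0$, $\Zb_n$ in every odd degree, and $0$ in every positive even degree, which yields $H_3(C_3;\Zb) \cong \Zb_3$. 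Chaining these isomorphisms together gives the claimed chain $\Omega^\SO_3(C_3, *) \cong H_3(C_3;\Zb) \cong \Zb_3$.

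The main obstacle, and the step deserving the most care, is the first isomorphism $\Omega^\SO_3(C_3, *) \cong \Omega^\SO_3(\mathrm{B}C_3)$. The content here is verifying that the quotient-and-classify construction is a well-defined bijection on bordism classes: one must check that it is surjective (every map $N \to \mathrm{B}C_3$ pulls back the universal $C_3$-bundle to a free oriented $C_3$-manifold over $N$, with orientation induced from $N$) and injective (a free $C_3$-bordism between total spaces corresponds precisely to an oriented bordism of quotients carrying a map to $\mathrm{B}C_3$ restricting correctly on the boundary). These are standard facts in equivariant bordism theory — indeed they are essentially the content of the Conner--Floyd identification of bordism with a family in the free case — so I would cite \cite{conner-floyd:1964, conner-floyd:1966} rather than reprove them, being careful only to note that orientations are preserved because $C_3$ acts by orientation-preserving diffeomorphisms. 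The two subsequent steps (applying Lemma~\ref{lem:orcobtohom} and computing the cyclic group homology) are routine and require no new ideas.
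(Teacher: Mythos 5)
Your proposal is correct and follows essentially the same route as the paper: identify $\Omega^\SO_3(C_3,*)$ with $\Omega^\SO_3(\mathrm{B}C_3)$ via the quotient/classifying-map construction for free actions (citing Conner--Floyd), apply Lemma~\ref{lem:orcobtohom}, and compute $H_3(C_3;\Zb)\cong\Zb_3$. Your write-up merely spells out in more detail the bijectivity of the quotient construction and the periodic resolution, which the paper leaves as standard.
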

\begin{proof}
Since the $C_3$-action on $M(\Lc, \psi)$ is free, the map $M(\Lc, \psi) \to M(\Lc, \psi)/C_3$ is a principal $C_3$-bundle, and therefore it defines a continuous map $M(\Lc, \psi)/C_3 \to \mathrm{B}C_3$, unique up to homotopy, where $\mathrm{B}C_3$ is the classifying space of the group $C_3$. In this fashion, one gets an isomorphism $\Omega^\SO_3(C_3, *) \cong \Omega^\SO_3(\mathrm{B}C_3)$ \cite{conner-floyd:1964}. By Lemma \ref{lem:orcobtohom}, $\Omega^\SO_3(\mathrm{B}C_3) \cong H_3(\mathrm{B}C_3; \Zb) = H_3(C_3; \Zb)$. The isomorphism $H_3(C_3;\Zb) \cong \Zb_3$ is well known. 
\end{proof}

By Theorem \ref{thm:consofcobinv} and Lemma \ref{lem:isurgery} the $i$-invariant is trivial for a tricolored link that is not topologically protected. Below, we give examples of topologically protected tricolored knots. 

\begin{thm}\label{thm:nontrivtrefoil}
Let $(\Lc, \psi)$ be a tricolored trefoil knot (Fig.~\ref{fig:tricoloex}(b)). Then $i(\Lc, \psi) \not = 0 \in \Zb_3$.
\end{thm}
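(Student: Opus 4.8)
The plan is to identify the branched cover $M(\Lc,\psi)$ explicitly and then read off its class in $\Omega^\SO_3(C_3,*)\cong\Zb_3$ via the isomorphisms of Lemmas~\ref{lem:orcobtohom} and~\ref{lem:invinZ3}. First I would record that in the standard tricoloring of the trefoil all three arcs receive distinct colors, so the three meridian (Wirtinger) generators map to the three distinct inversions $\sigma_{12},\sigma_{13},\sigma_{23}$. Since these generate $\Sigma_3$, the coloring $\psi\colon\pi_1(S^3\setminus\Lc)\to\Sigma_3$ is surjective, and hence the total space $M:=M(\Lc,\psi)$ of the branched cover is connected.

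Next I would identify the intermediate quotient $M/C_3$. Because $C_3\trianglelefteq\Sigma_3$ is normal with quotient $C_2$, the space $M/C_3\to S^3$ is the branched double cover associated to the composite $\pi_1(S^3\setminus\Lc)\xrightarrow{\psi}\Sigma_3\xrightarrow{\mathrm{sgn}}C_2$. Each meridian maps to an inversion, which has sign $-1$, so this double cover is branched along the entire knot; it is therefore the classical double branched cover of $S^3$ along the trefoil, which (the trefoil being the torus knot of determinant $3$) is the lens space $L(3,1)$. Since the $C_3$-action on $M$ is free, the map $M\to M/C_3=L(3,1)$ is an honest unbranched principal $C_3$-bundle over a connected base, classified by a surjection $\pi_1(L(3,1))=\Zb_3\twoheadrightarrow C_3$; as this is a surjection between groups of order $3$ it is an isomorphism, so $M$ is the universal cover of $L(3,1)$, namely $S^3$ equipped with a free linear $C_3$-action.

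It then remains to evaluate $[M]$. Under the isomorphism $\Omega^\SO_3(C_3,*)\cong\Omega^\SO_3(\mathrm{B}C_3)$ of Lemma~\ref{lem:invinZ3}, the class $[M]$ corresponds to the classifying map $cl\colon M/C_3=L(3,1)\to\mathrm{B}C_3$ of the bundle $M\to M/C_3$, and under the further Atiyah--Hirzebruch isomorphism $\Omega^\SO_3(\mathrm{B}C_3)\cong H_3(C_3;\Zb)$ of Lemma~\ref{lem:orcobtohom} it corresponds to $cl_*[L(3,1)]\in H_3(\mathrm{B}C_3;\Zb)=\Zb_3$. By the previous paragraph $cl$ induces an isomorphism on $\pi_1$, so up to an automorphism of $\mathrm{B}C_3$ it is homotopic to the inclusion of the $3$-skeleton $L(3,1)\hookrightarrow L(3,1)^\infty=\mathrm{B}C_3$; from the standard cellular chain complex of the infinite lens space this inclusion sends the fundamental class to a generator of $H_3(\mathrm{B}C_3;\Zb)\cong\Zb_3$, and hence so does $cl$. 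Therefore $i(\Lc,\psi)=[M]$ is a generator of $\Zb_3$, in particular nonzero.

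The main obstacle I anticipate is the clean identification of $M/C_3$ with the double branched cover $L(3,1)$, together with the verification that $cl$ is a $\pi_1$-isomorphism: one must check carefully that the sign-reduction of $\psi$ is branched over the full knot and that the resulting principal $C_3$-bundle has connected total space. The remaining homological computation, namely the pushforward of the fundamental class of the $3$-skeleton into $H_3(\mathrm{B}C_3;\Zb)$, is standard once these identifications are in place.
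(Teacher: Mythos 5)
Your argument is correct and follows essentially the same route as the paper: both identify $M(\Lc,\psi)/C_3$ with the double branched cover $L(3,1)$ of the trefoil, observe that the classifying map $L(3,1)\to\mathrm{B}C_3$ is a $\pi_1$-isomorphism, and conclude that it sends the fundamental class to a generator of $H_3(\mathrm{B}C_3;\Zb)\cong\Zb_3$. The only real difference is in the final computation, where the paper runs the Serre spectral sequence of the fibration $S^3\to L(3,1)\to\mathrm{B}C_3$ and uses the edge homomorphism, whereas you use the cellular chain complex of the infinite lens space and the inclusion of its $3$-skeleton; you also spell out, via the sign homomorphism, why $M/C_3$ is the classical double branched cover, a point the paper delegates to a citation of Rolfsen.
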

\begin{proof}
We have to show that, for a tricolored trefoil $(\Lc, \psi)$, the induced map 
\begin{equation}
f: X := M(\Lc,\psi)/C_3 \to \mathrm{B}C_3
\end{equation}
satisfies $f_*[X] \not = 0 \in H_3(\mathrm{B}C_3; \Zb) \cong \Zb_3$. As $[X]$ is a generator of $H_3(X; \Zb) \cong \Zb$, this is equivalent to the surjecitivy of $f_*: H_3(X;\Zb) \to H_3(\mathrm{B}C_3; \Zb)$. 

By Ref.~\cite[Chapter 10D]{rolfsen:2003}, $X$ is diffeomorphic to the lens space $L(3,1)$, which in turn is diffeomorphic to $S^3$ by a free action on $C_3$. By construction, $\pi_1(f): \pi_1(X) \to \pi_1(\mathrm{B}C_3)$ is surjective; as both groups are of order 3, $\pi_1(f)$ is an isomorphism. In conclusion, we have a homotopy fiber sequence of spaces
\begin{equation}\label{eq:trefoilfibseq}
S^3 \to X \to \mathrm{B}C_3.
\end{equation}

The form of the Serre spectral sequence (see e.g. \cite{hatcher:ss, boardman:1999}) of the fiber sequence (\ref{eq:trefoilfibseq}) is
\begin{equation}
E^2_{p,1} = H_p(\mathrm{B}C_3;H_q(S^3;\Zb)) \Rightarrow H_{p + q}(X; \Zb),
\end{equation}
and the differentials are homomorphisms $d_r: E^r_{p,q} \to E^r_{p-r,q+r-1}$. 
The desired surjectivity of the homomorphism $f_*$ is equivalent to the equality 
\begin{equation}
E^2_{3,0} = E^\infty_{3,0}.
\end{equation}
As the cells $E^i_{3,0}$ do not support any non-trivial differentials for dimensional reasons, the claim follows.
\end{proof}

\begin{rem}\label{rem:norminv}
As left-handed and right-handed trefoils are equivalent to each other under an orientation reversing diffeomorphism, the left-handed and the right-handed trefoils have opposite $i$-invariants. We normalize the $i$-invariant in such a way that the $i$-invariant of the right-handed trefoil is by definition $[1] \in \Zb_3$.
\end{rem}

We have shown the existence of two non-equivalent topologically protected tricolored links. Moreover, any tricolored link that is equivalent to an untangled disjoint union of tricolored trefoils is either topologically trivial, or equivalent to a single tricolored trefoil (Fig.~\ref{fig:tricolotable}). Next we prove that every topologically protected tricoloring is topologically equivalent to a single trefoil.

\begin{thm}\label{thm:tricoloclassification}
Let $(\Lc, \psi)$ be a tricolored link. Then $(\Lc, \psi)$ is either topologically trivial, or topologically equivalent to either a left-handed or a right-handed tricolored trefoil.
\end{thm}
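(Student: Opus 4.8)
The plan is to show that any tricolored link can be simplified, via topologically allowed local surgeries, down to a normal form consisting of an untangled disjoint union of simple loops together with at most one tricolored trefoil, and then to invoke the $i$-invariant to rule out all but the cases asserted. The key structural input is that the allowed surgeries include reconnections that can split and merge strands, crossing moves for commuting colors, and the colored Reidemeister moves; morally these generate enough flexibility to untangle an arbitrary diagram. First I would set up an induction on the crossing number of a diagram of $(\Lc,\psi)$. Given a diagram with a positive number of crossings, I would argue that one can always perform a reconnection or a permitted crossing move at some crossing to reduce complexity, the subtlety being that reconnections respect the coloring only when the two incoming arcs carry compatible colors. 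Here the smallness of $\Sigma_3$ and the structure of $I$ is crucial: at any crossing the over- and under-arcs are colored by inversions, and the Wirtinger/Fox relation $\sigma_{ij}\,\sigma_{kl}\,\sigma_{ij}^{-1}$ forces the colors of the three arcs meeting at a crossing to be either all equal or all distinct. This dichotomy is what makes a finite case analysis possible.

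The second step is a careful enumeration of the local coloring configurations at a crossing and a verification that in each case some topologically allowed move (reconnection, crossing exchange for a commuting pair, or a colored Reidemeister move) applies and strictly decreases a suitable complexity measure, for instance the lexicographic pair consisting of the number of crossings and the number of components. The reconnection move is the workhorse: it replaces a crossing by a pair of oriented arcs, and by Theorem~\ref{thm:consofcobinv} it preserves the bordism invariant, so nothing is lost by applying it freely. I would organize the reduction so that after finitely many moves the diagram becomes a disjoint union of unknotted, unlinked loops possibly decorated by finitely many isolated trefoil summands, appealing implicitly to the fact that trefoils are the minimal genuinely knotted obstruction one can produce with inversions as colors.

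Once the normal form is reached, the classification is finished by the $i$-invariant together with additivity. By Lemma~\ref{lem:addcobinv}, Lemma~\ref{lem:trivcobinv}, and Lemma~\ref{lem:isurgery}, the $i$-invariant of a disjoint union of $k$ tricolored trefoils of various handednesses is the sum of their individual invariants in $\Zb_3$, where each right-handed trefoil contributes $[1]$ and each left-handed one contributes $[-1]=[2]$ by Remark~\ref{rem:norminv}. Since two parallel trefoils of the same handedness sum to $[2]=[-1]$, which is the invariant of a single trefoil of the opposite handedness, and since opposite-handed trefoils cancel, one checks that any such disjoint union is topologically equivalent either to the topologically trivial configuration or to a single trefoil of one of the two handednesses (as illustrated in Fig.~\ref{fig:tricolotable}). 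Combining this with the reduction to normal form yields the theorem.

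The hard part, I expect, will be the reduction step itself: justifying rigorously that every tricolored diagram can be driven to the claimed normal form purely through topologically allowed surgeries. The danger is a coloring obstruction at a crossing where no permitted move strictly reduces complexity, and ruling this out requires either a clean invariant-theoretic argument (showing that the only indecomposable nontrivial class in the relevant bordism group is represented by the trefoil, so that anything with trivial $i$-invariant must untangle completely) or an exhaustive but finite combinatorial check exploiting the rigidity of inversions in $\Sigma_3$. I would lean on the former where possible, using that $\Omega^\SO_3(C_3,*)\cong\Zb_3$ is so small that the value of the $i$-invariant nearly determines the equivalence class, and reserve the diagrammatic case analysis for establishing that the invariant is in fact a complete invariant on the relevant set of configurations.
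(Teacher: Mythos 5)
There is a genuine gap in the central reduction step, and you have in fact flagged it yourself: the claim that every tricolored diagram can be driven to a normal form of unlinked loops plus isolated trefoils is asserted but not proved. Your proposed induction on crossing number is problematic as stated, because a reconnection at a crossing does not in general strictly decrease the crossing number of the diagram (it changes the component structure and can leave the remaining crossings arbitrarily entangled), so the complexity measure you suggest need not be monotone and the induction does not close. Your fallback --- arguing that the only indecomposable nontrivial class in $\Omega^\SO_3(C_3,*)$ is represented by the trefoil, so that anything with trivial $i$-invariant must untangle --- is circular: the $i$-invariant only obstructs triviality, and the statement that links with equal invariant are topologically equivalent is precisely the content of the theorem being proved. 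No bordism computation can substitute for the diagrammatic reduction here.

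The paper's actual argument is structured differently and avoids these issues. It first applies a specific local surgery that puts the diagram into a form where every crossing is immediately followed by a second crossing (splitting off tricolored trefoils along the way), then encodes each such double crossing as an edge joining disjoint simple loops, and inducts on the number of edges of the resulting loops-and-edges diagram rather than on the crossing number. The inductive step is an explicit, finite case analysis: an innermost loop can be arranged, via reconnections, to meet at most three edges, and in each of the two- and three-edge cases an explicit sequence of surgeries reduces the edge count by at least one, at the cost of splitting off simple loops and trefoils. Your final step --- using the trefoil addition rules and the $\Zb_3$-valued $i$-invariant to collapse a disjoint union of trefoils to at most one --- does match the paper and is correct, but it only applies once the normal form has been reached, which is exactly the part your proposal leaves open.
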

\begin{proof}
By the trefoil addition rules (Fig.~\ref{fig:tricolotable}), it is enough to show that $(\Lc, \psi)$ is topologically equivalent to an untangled disjoint union of simple loops and tricolored trefoils. Using the local surgery of Fig.~\ref{fig:tricoloclass1}(a), we can transform the the link diagram of $(\Lc, \psi)$ into a form, in which each crossing is immediately followed by another crossing. Denoting each double crossing by an edge, the link diagram is transformed into a diagram consisting of a disjoint union of simple loops that connected by edges (Fig.~\ref{fig:tricoloclass1}(b) and (c)). We will argue by induction on the number of edges.

Suppose $L$ is an \emph{innermost loop} in the diagram, i.e., inside $L$ there are no other loops. Using topologically allowed reconnections, we may assume that $L$ is connected to at most $3$ edges (Fig.~\ref{fig:tricoloclass1}(d)). Fig.~\ref{fig:tricoloclass2} and Fig.~\ref{fig:tricoloclass3} illustrate general procedures that reduce the number edges by performing local surgeries around $L$ in the cases where $L$ is connected to two and three edges, respectively. Hence, using topologically allowed local surgeries, $(\Lc, \psi)$ can be transformed into an untangled disjoint union of simple loops and tricolored trefoils, proving the claim.
\end{proof}

\section{Discussion}\label{sect:conclusions}

In Section~\ref{sect:cobinv}, we constructed bordism invariant of colored links, which we employed in Section~\ref{sect:tricolor} to classify tricolored links up to topologically allowed local surgeries. In physical terms, tricolored links model those topological vortex configurations supported by a medium, the order parameter space $X$ of which satisfies $\pi_1(X) \cong \Sigma_3$ and $\pi_2(X) \cong 0$, that do not contain vortices corresponding to cyclic permutations of order 3. Of course, if a given $G$-colored link diagram takes values in a subgroup $H \subset G$, then the same is true for any link diagram obtained from it by $G$-colored Reidemeister moves and topologically allowed local reconnections. Therefore, for physical realization, it would be enough to find a system for which $\Sigma_3 \subset \pi_1(X)$ and $\pi_2(X) \cong 0$. 

Are there any concrete physical systems for which $\pi_1(X)$ contains $\Sigma_3$ as a subgroup? Unfortunately, this cannot be the case for spinor Bose--Einstein condensates or liquid crystals, because $\Sigma_3$ does not appear as a discrete subgroup of $\SU(2)$ \cite{mckay:1981}. Rather, it is the corresponding double cover $\Dic_3$, \emph{the dicyclic group} of order 12, which appears as a subgroup of $\SU(2)$.  However, one can still leverage tricolored links to study the stability of vortex configurations: if there exists a (surjective) group homomorphism $\psi: G \to \Sigma_3$, and a $G$-colored link that maps under $\psi$ to a topologically protected tricolored link, then the original link is topologically protected as well. However, it is not clear how successful this strategy would be for $\Dic_3$: the nontrivial tricoloring of a trefoil does not lift to a $\Dic_3$-coloring because the group homomorphism $B_3 \to \Sigma_3$ that sends a braid on three strands to the induced permutation on three letters does not factor through the degree-two homomorphism $\Dic_3 \to \Sigma_3$. 

On a purely theoretical level, it is not difficult to come up with reasonable models, the order parameter space $X$ of which satisfies $\pi_1(X) \cong \Sigma_3$ and $\pi_2(X) \cong 0$. For example, as $\Sigma_3$ is a discrete subgroup of the simply connected Lie group $\SU(6)$, and as, by Mostow--Palais theorem \cite{mostow:1957, palais:1957}, there exists a spontaneous-symmetry-breaking process that breaks $\SU(6)$ down to $\Sigma_3$, there exists a Yang--Mills theory such that the order parameter space of the vacuum structures is homeomorphic to $\SU(6) / \Sigma_3$. It might be possible to realize such systems in laboratory setting as artificial gauge field theories \cite{dalibard:2011}.

The results of this article naturally lead to further questions. Consider for example the $\Zb_n$-valued invariants for Fox $n$-colorings \cite{fox:1970}, obtained from the bordism invariant of Section~\ref{sect:cobinv} similarly to how the $i$-invariant was obtained in Section~\ref{sect:tricolor}. Does this invariant provide a complete classification of $n$-colorings in the sense that, for each non-trivial value $[m] \in \Zb_m$ of the invariant, there exists only one Fox $n$-coloring up to topological equivalence? For an odd prime $p > 2$, the torus knot $T_{2,p}$ admits a Fox $p$-coloring \cite{breiland:2009}. Moreover, since the branched double cover of $T_{2,p}$ is equivalent to the lens space $L(p,1)$ \cite{rolfsen:2003}, one shows as in the proof of Theorem \ref{thm:nontrivtrefoil} that the invariant of the $p$-coloring is a generator of $\Zb_p$. Hence, by additivity, there exist a Fox $p$-colored links that have every possible value of the invariant. 

\clearpage

\begin{figure}[H]
\includegraphics[scale=1.4]{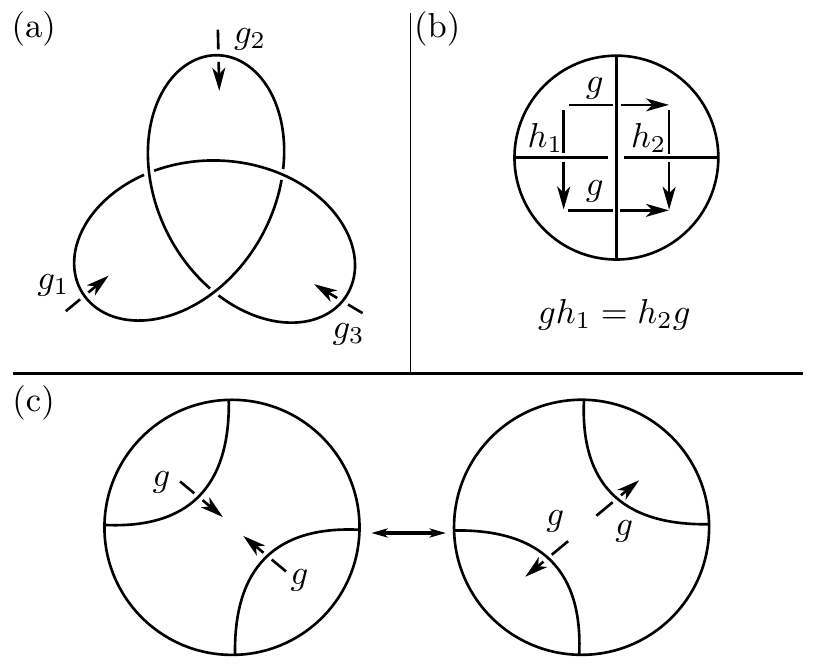}
\caption{ 
$G$-colored link diagrams and topologically allowed reconnections.
(a) In a $G$-colored link diagram, each arc is equipped with an arrow and an element $g \in G$. The content of this extra data is to present a group homomorphism $\psi: \pi_1(S^3 \backslash \Lc, s_0) \to G$ where $\Lc$ denotes the link and the basepoint $s_0$ is taken to lie above the plane of the paper. Namely, if $\gamma$ is the loop that starts at $s_0$, travels down to the back of the arrow, follows the arrow under the arc, and returns back to $s_0$, then $\psi([\gamma]) = g$. 
This rule defines a well-defined group homomorphism $\psi: \pi_1(S^3 \backslash \Lc, s_0) \to G$ if and only if the \emph{Wirtinger relation} (b) is satisfied at each crossing of the diagram. The direction of the arrows is completely arbitrary: the direction of any arrow may be reversed, as long as the corresponding element of $G$ is replaced by its inverse.
(c) A \emph{topologically allowed local reconnection} is a reconnection of the vortex cores that respects the $G$-coloring.
}\label{fig:linkbasics}
\end{figure}

\begin{figure}[H]
\includegraphics[scale=1]{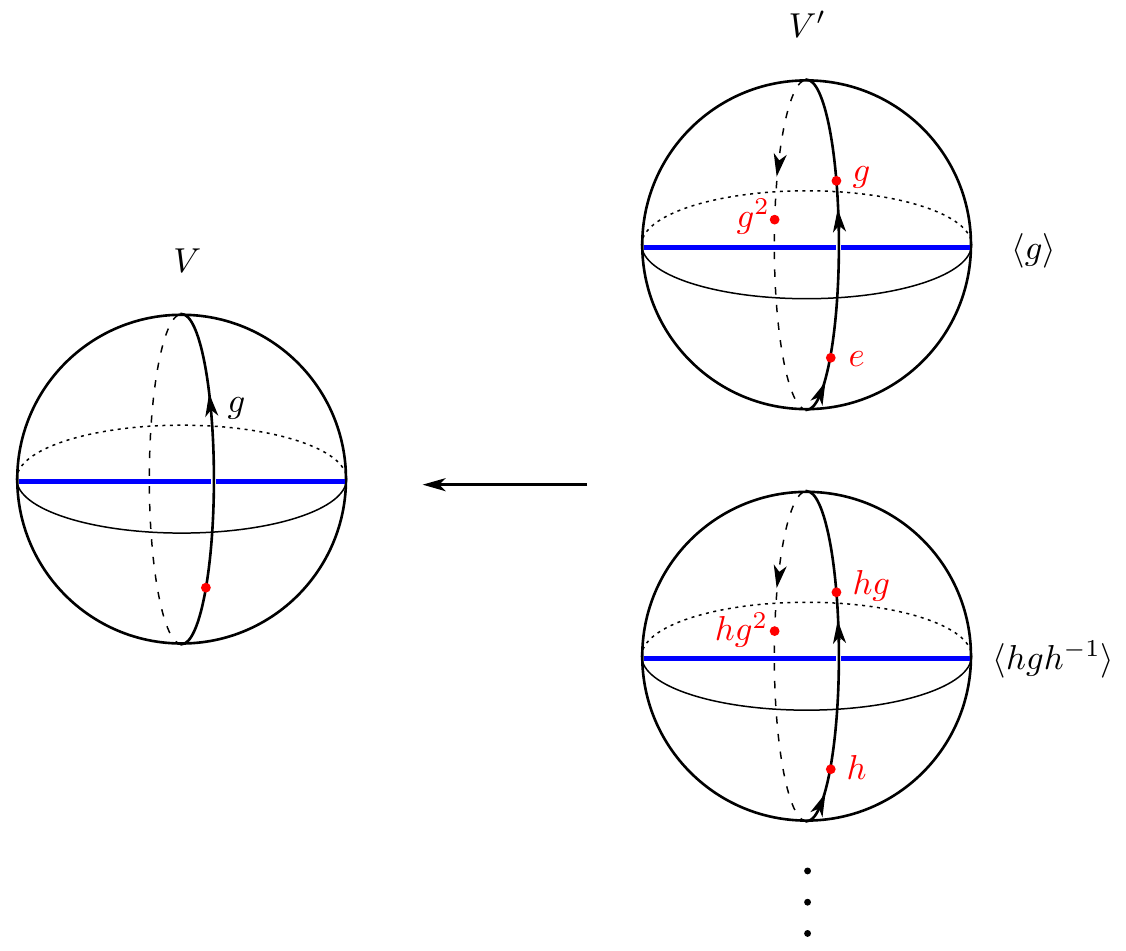}
\caption{ 
Local structure of the branched covering $M(\Lc, \psi) \to S^3$ (Construction~\ref{cons:M}). On the left, we have a small closed neighborhood $V$ of a point on the link $\Lc$ (blue). The homotopy class of the loop $\gamma$ winding about the core of $\Lc$ once (the basepoint of $\gamma$ is denoted by a red dot) is sent to $g \in G$ under the $G$-coloring $\psi$. On the right, we have the preimage $V'$ of $V$ under the branched covering $M(\Lc, \psi) \to S^3$. The preimages of the points in $\Lc$ are denoted by blue. On the complement of these points, the branched covering is a principal $G$-bundle. The preimages of the basepoint are denoted by red dots; there exist one such point for every element of $G$. Moreover the unique lift  of the loop $\gamma$, starting at the point corresponding to $h \in G$, ends at the point corresponding to $hg \in G$. The group $G$ acts on $M(\Lc, \psi) \vert_U$; on the red dots, this action is identified with the action of $G$ on itself by left multiplication. The stabilizer group of the blue points on each component is always conjugate to the cyclic group $\langle g \rangle$ generated by $g$. The stabilizer group is given for the two components depicted in the figure. The components of $M(\Lc, \psi) \vert_U$ are in bijective correspondence with the cosets $G/\langle g \rangle$.
}\label{fig:bcover}
\end{figure}

\begin{figure}[H]
\includegraphics[scale=1.1]{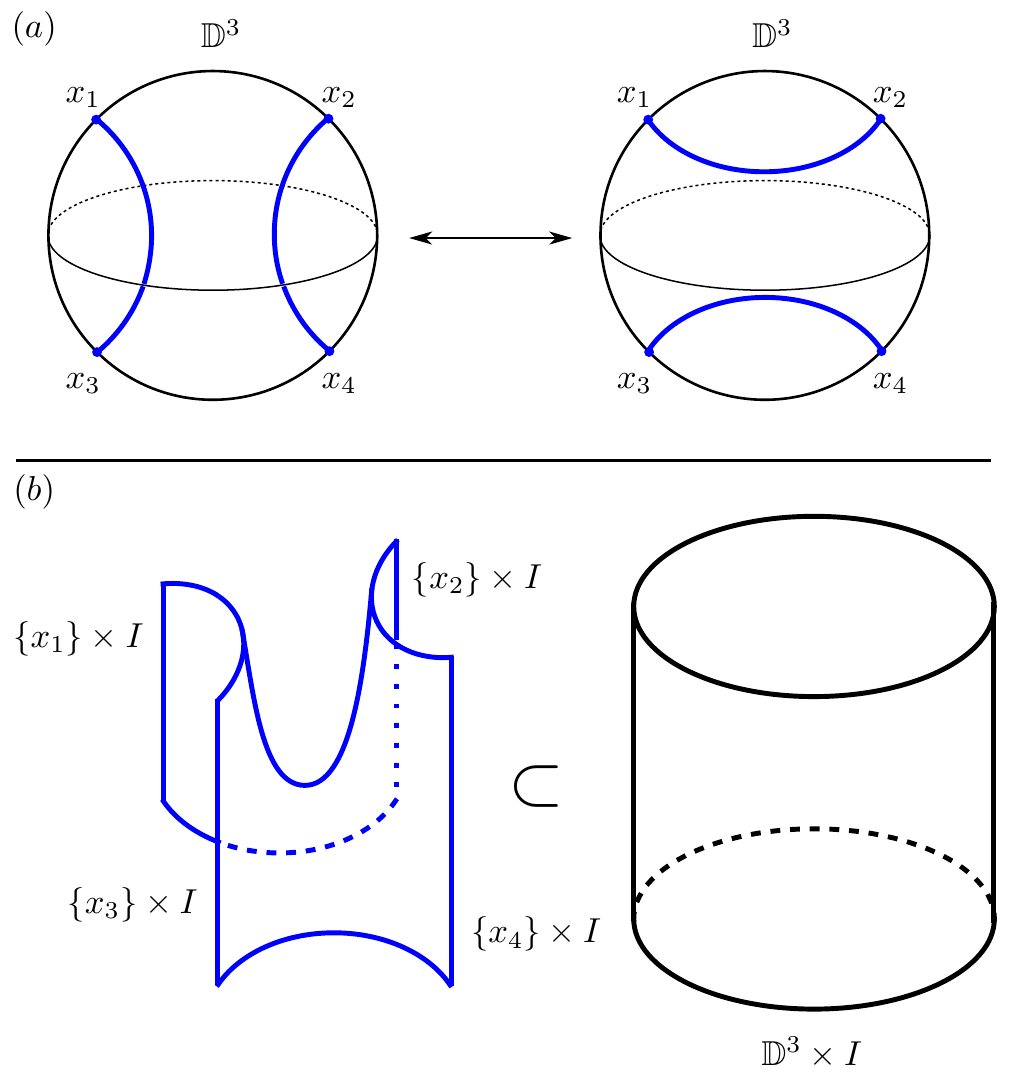}
\caption{
The saddle bordism (proof of Lemma \ref{lem:reconncobinv}).
(b) The complement of the saddle (blue) in $\Db^3 \times I$ provides a bordism between the complements of the two configurations of arcs (blue) inside $\Db^3$ (a). Both arc configurations have the same endpoints $x_i$ on the boundary sphere $\partial \Db^3$, and the saddle is constant on the boundary, i.e., $S \cap (\partial \Db^3 \times I) = \{x_1, x_2, x_3, x_4\} \times I$. 
}\label{fig:saddlecob}
\end{figure}

\begin{figure}[h!]
\includegraphics[scale=1.1]{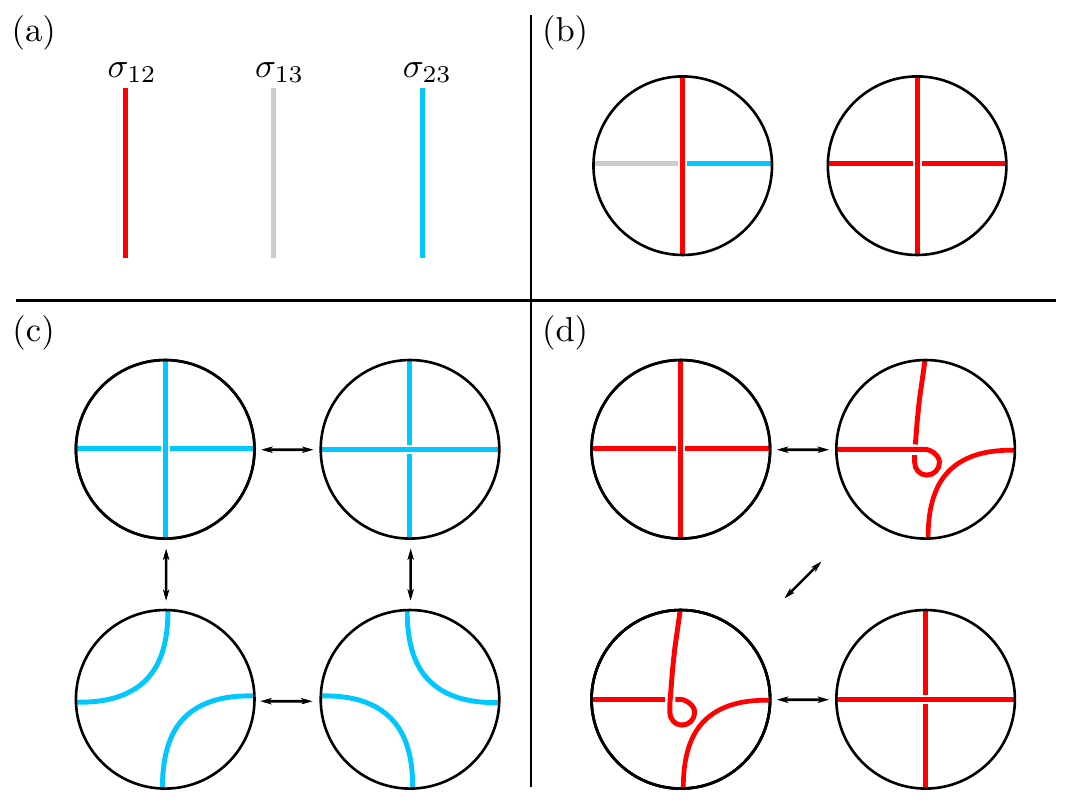}
\caption{
Graphical presentation of tricolored links (Definition~\ref{def:tricolo}). 
(a) A tricolored link diagram admits a more graphical presentation than that described in Fig.~\ref{fig:linkbasics}: instead of labeling each arc with an arrow and a group element, it suffices to color each arc with either red, green, or blue, which correspond to the inversions $\sigma_{12}, \sigma_{13}$, and $\sigma_{23}$, respectively.
(b) The Wirtinger relation admits a graphical presentation: at each crossing, either all arcs are of the same color, or all of them are of a different color.
(c) Topologically allowed local surgeries for tricolored links take place between strands of the same color. The surgery distinguished by a star is a \emph{saddle reconnection}.
(d) Up to link isotopy, all topologically allowed local surgeries can be reduced to a saddle reconnection. Above, this is demonstrated for the strand crossing.
}\label{fig:tricolorules}
\end{figure}

\begin{figure}[h!]
\includegraphics[scale=1.1]{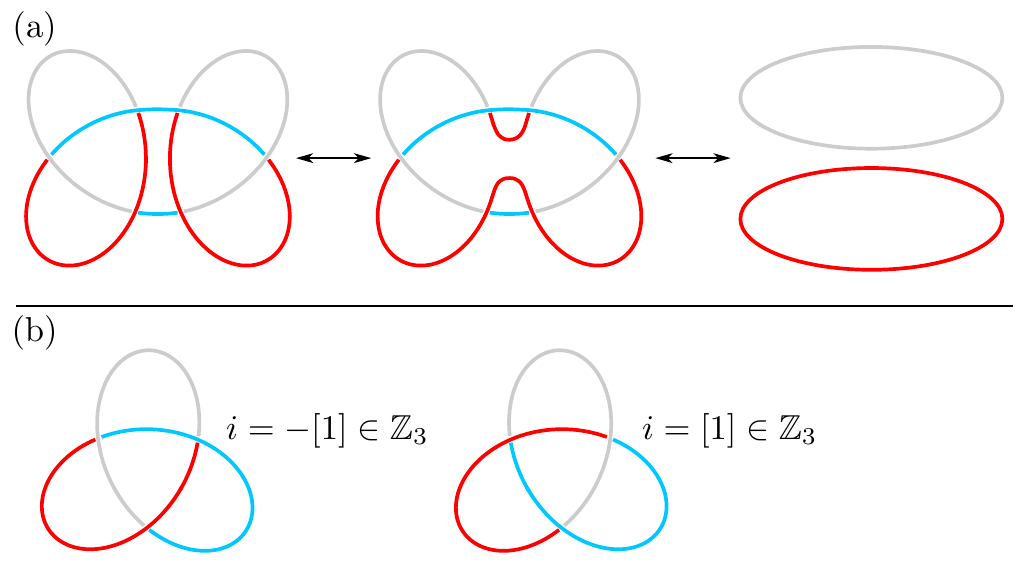}
\caption{
Examples of topologically trivial and topologically protected tricolored knots.
(a) An example of a topologically trivial tricolored knot, and an example of its decay.
(b) Two examples of topologically protected tricolored knots (Theorem \ref{thm:nontrivtrefoil}), and the values of their $i$-invariants (with the normalization convention of Remark~\ref{rem:norminv}). Every topologically protected link can be transformed into one of these structures using smooth isotopies and topologically allowed local surgeries (Theorem~\ref{thm:tricoloclassification}).
}\label{fig:tricoloex}
\end{figure}

\begin{figure}[h!]
\includegraphics[scale=1.1]{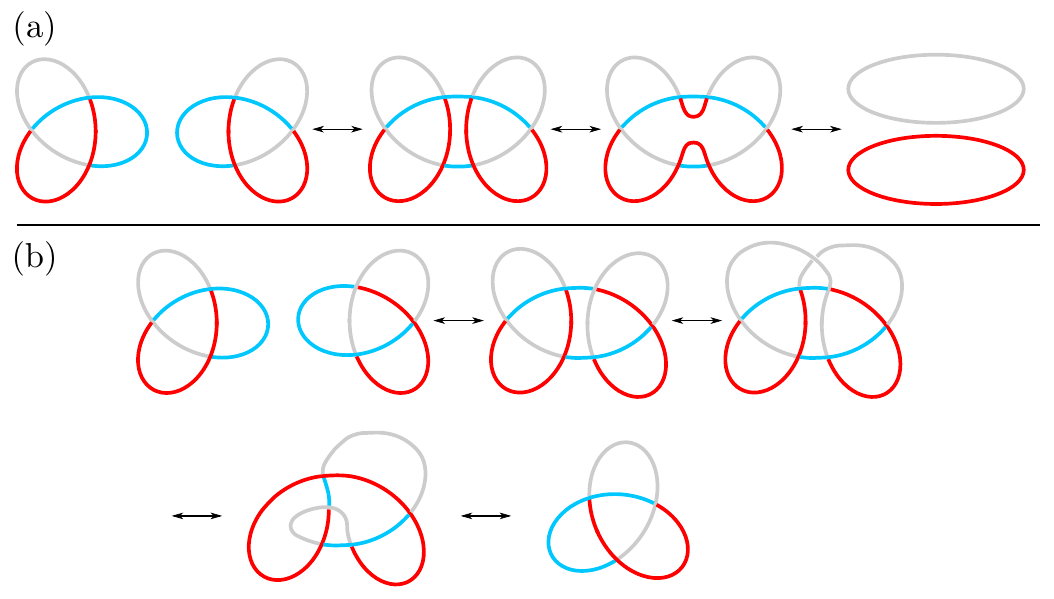}
\caption{
Addition rules for tricolored trefoils. 
(a) The combination of a left-handed and a right-handed tricolored trefoil can be transformed, using topologically allowed local surgeries, into two unlinked simple loops. 
(b) The combination of two left-handed tricolored trefoils can be transformed, using topologically allowed local surgeries, into a right-handed tricolored trefoil. Similarly, the combination of two right-handed tricolored trefoils can be transformed into a left-handed trefoil.
}\label{fig:tricolotable}
\end{figure}

\begin{figure}
\includegraphics[scale=1.1]{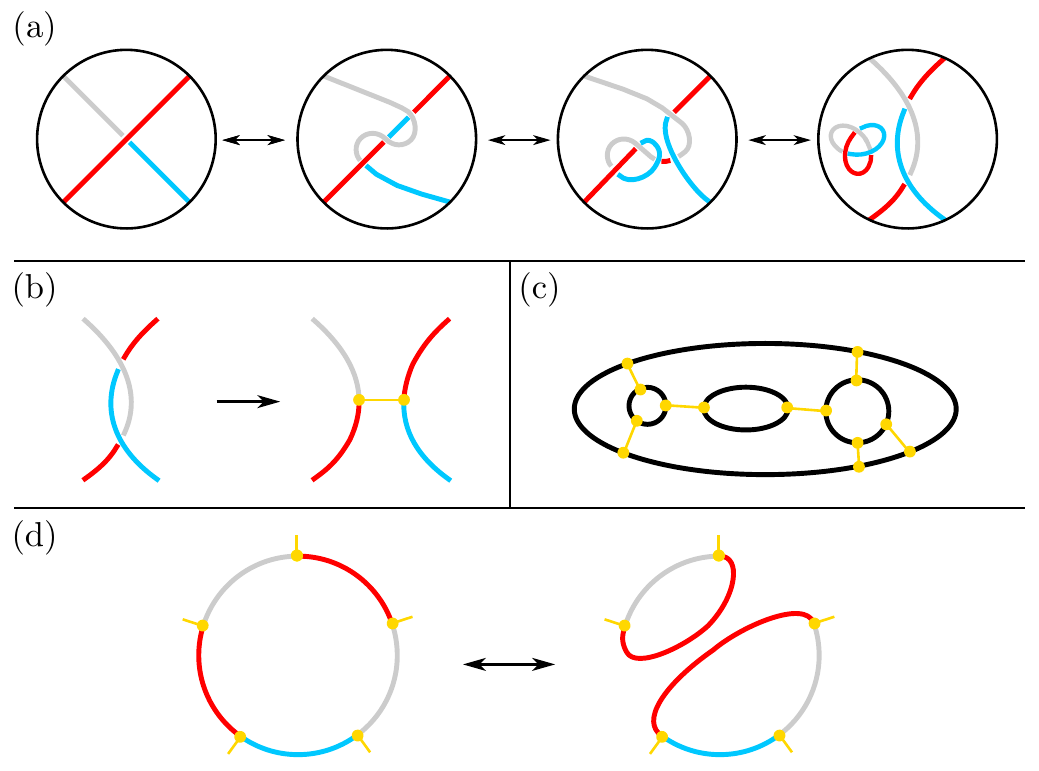}
\caption{
Transforming a tricolored link into a special form (proof of Theorem \ref{thm:tricoloclassification}). 
(a) By performing topologically allowed local surgeries, each crossing of the link diagram can be transformed into a form where an overcrossing is immediately followed by an undercrossing. A tricolored trefoil is split off from the link in the process.
(b) To simplify the graphical presentation, we depict each of the obtained double crossings by a yellow edge connecting the arcs.
(c) Using the notation described in (b), the surgeries depicted in (a) may be employed to transform a tricolored link diagram into a diagram consisting of non-overlapping simple loops, which are connected by edges. Here we have omitted the coloring of the arcs for simplicity.
(d) An innermost loop (a loop inside which there are no other loops) that is connected to more than three edges may be broken up to several loops, which are connected to at most three edges.
}\label{fig:tricoloclass1}
\end{figure}

\begin{figure}
\includegraphics[scale=0.9]{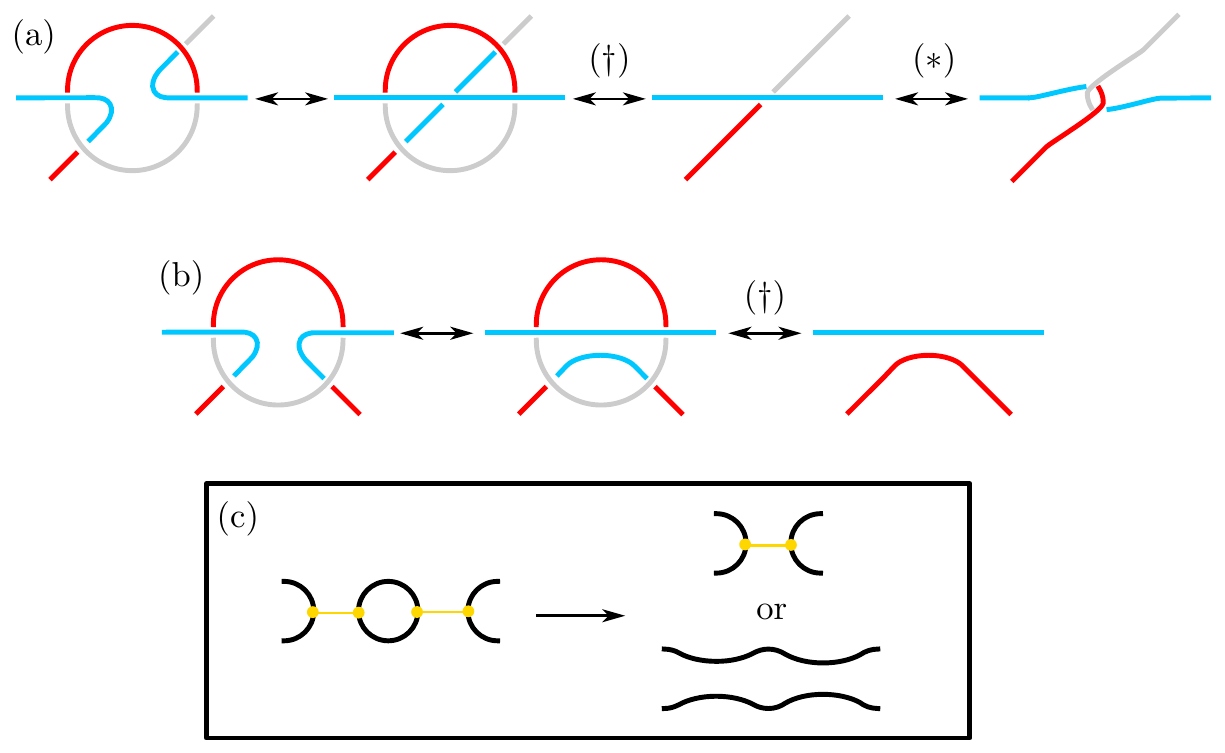}
\caption{
Eliminating an innermost loop connected to two edges (proof of Theorem \ref{thm:tricoloclassification}). 
(a),(b) Up to permutation of colors and isometries of the plane of the diagram, there are only two possible structures around an innermost loop that is connected to two edges. (c) In both cases, the number of edges may be reduced by at least one, with the cost of potentially splitting off a simple loop $(\dagger)$ or a tricolored trefoil knot $(*)$.
}\label{fig:tricoloclass2}
\end{figure}

\begin{figure}
\includegraphics[scale=1.05]{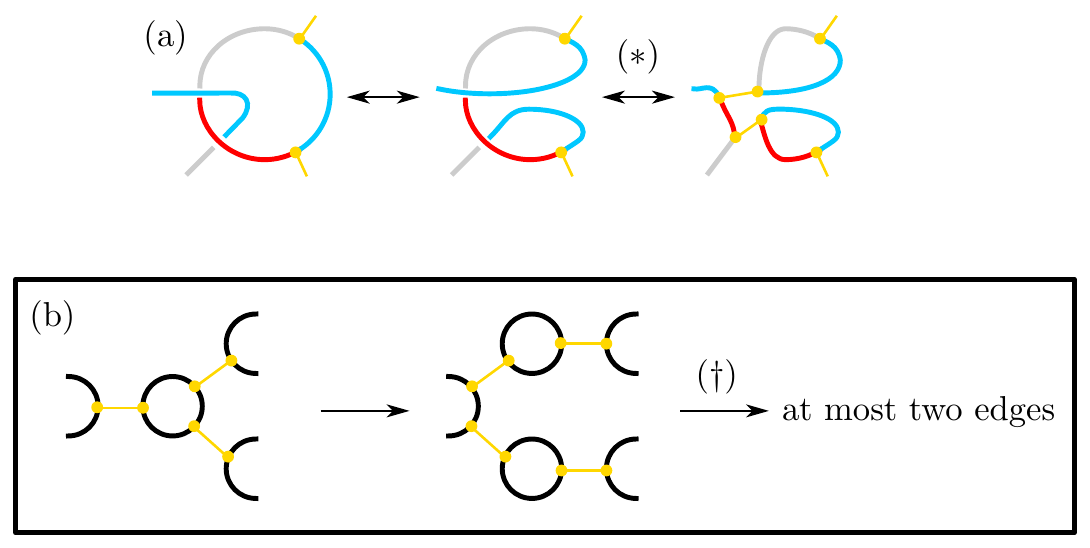}
\caption{
Eliminating an innermost loop connected to three edges (proof of Theorem \ref{thm:tricoloclassification}).
(a) Up to permutation of colors and isometries of the plane of the diagram, there is only one relevant structure to consider. The mark $(*)$ implies a process in which two trefoils are split off. (b) After performing the surgery that is depicted in (a), we may reduce to the two-edge case $(\dagger)$, which was considered in Fig.~\ref{fig:tricoloclass2}. Hence, the number of edges may be reduced by at least one, with the cost of potentially splitting off simple loops and tricolored trefoils.
}\label{fig:tricoloclass3}
\end{figure}

\clearpage

\bibliography{../../../References/references}


\begin{thebibliography}{38}
\ifx \bisbn   \undefined \def \bisbn  #1{ISBN #1}\fi
\ifx \binits  \undefined \def \binits#1{#1}\fi
\ifx \bauthor  \undefined \def \bauthor#1{#1}\fi
\ifx \batitle  \undefined \def \batitle#1{#1}\fi
\ifx \bjtitle  \undefined \def \bjtitle#1{#1}\fi
\ifx \bvolume  \undefined \def \bvolume#1{\textbf{#1}}\fi
\ifx \byear  \undefined \def \byear#1{#1}\fi
\ifx \bissue  \undefined \def \bissue#1{#1}\fi
\ifx \bfpage  \undefined \def \bfpage#1{#1}\fi
\ifx \blpage  \undefined \def \blpage #1{#1}\fi
\ifx \burl  \undefined \def \burl#1{\textsf{#1}}\fi
\ifx \doiurl  \undefined \def \doiurl#1{\url{https://doi.org/#1}}\fi
\ifx \betal  \undefined \def \betal{\textit{et al.}}\fi
\ifx \binstitute  \undefined \def \binstitute#1{#1}\fi
\ifx \binstitutionaled  \undefined \def \binstitutionaled#1{#1}\fi
\ifx \bctitle  \undefined \def \bctitle#1{#1}\fi
\ifx \beditor  \undefined \def \beditor#1{#1}\fi
\ifx \bpublisher  \undefined \def \bpublisher#1{#1}\fi
\ifx \bbtitle  \undefined \def \bbtitle#1{#1}\fi
\ifx \bedition  \undefined \def \bedition#1{#1}\fi
\ifx \bseriesno  \undefined \def \bseriesno#1{#1}\fi
\ifx \blocation  \undefined \def \blocation#1{#1}\fi
\ifx \bsertitle  \undefined \def \bsertitle#1{#1}\fi
\ifx \bsnm \undefined \def \bsnm#1{#1}\fi
\ifx \bsuffix \undefined \def \bsuffix#1{#1}\fi
\ifx \bparticle \undefined \def \bparticle#1{#1}\fi
\ifx \barticle \undefined \def \barticle#1{#1}\fi
\bibcommenthead
\ifx \bconfdate \undefined \def \bconfdate #1{#1}\fi
\ifx \botherref \undefined \def \botherref #1{#1}\fi
\ifx \url \undefined \def \url#1{\textsf{#1}}\fi
\ifx \bchapter \undefined \def \bchapter#1{#1}\fi
\ifx \bbook \undefined \def \bbook#1{#1}\fi
\ifx \bcomment \undefined \def \bcomment#1{#1}\fi
\ifx \oauthor \undefined \def \oauthor#1{#1}\fi
\ifx \citeauthoryear \undefined \def \citeauthoryear#1{#1}\fi
\ifx \endbibitem  \undefined \def \endbibitem {}\fi
\ifx \bconflocation  \undefined \def \bconflocation#1{#1}\fi
\ifx \arxivurl  \undefined \def \arxivurl#1{\textsf{#1}}\fi
\csname PreBibitemsHook\endcsname

\bibitem{volovik:1977}
\begin{botherref}
\oauthor{\bsnm{Volovik}, \binits{G.E.}},
\oauthor{\bsnm{Mineev}, \binits{V.P.}}:
Investigation of singularities in superfluid he$^3$ in liquid crystals by the
  homotopic topology methods.
Sov. Phys. - JETP (Engl. Transl.); (United States)
\textbf{45:6}
(1977)
\end{botherref}
\endbibitem

\bibitem{mermin:1979}
\begin{barticle}
\bauthor{\bsnm{Mermin}, \binits{N.D.}}:
\batitle{The topological theory of defects in ordered media}.
\bjtitle{Reviews of Modern Physics}
\bvolume{51}(\bissue{3}),
\bfpage{591}--\blpage{648}
(\byear{1979}).
\doiurl{10.1103/revmodphys.51.591}
\end{barticle}
\endbibitem

\bibitem{mineev:1998}
\begin{bbook}
\bauthor{\bsnm{Mineev}, \binits{V.}}:
\bbtitle{Topologically Stable Defects and Solitons in Ordered Media},
(\byear{1998})
\end{bbook}
\endbibitem

\bibitem{bucher:1992}
\begin{barticle}
\bauthor{\bsnm{Bucher}, \binits{M.}},
\bauthor{\bsnm{Lo}, \binits{H.-K.}},
\bauthor{\bsnm{Preskill}, \binits{J.}}:
\batitle{Topological approach to alice electrodynamics}.
\bjtitle{Nuclear Physics B}
\bvolume{386}(\bissue{1}),
\bfpage{3}--\blpage{26}
(\byear{1992}).
\doiurl{10.1016/0550-3213(92)90173-9}
\end{barticle}
\endbibitem

\bibitem{bucher:1992b}
\begin{barticle}
\bauthor{\bsnm{Bucher}, \binits{M.}},
\bauthor{\bsnm{Lee}, \binits{K.-M.}},
\bauthor{\bsnm{Preskill}, \binits{J.}}:
\batitle{On detecting discrete cheshire charge}.
\bjtitle{Nuclear Physics B}
\bvolume{386}(\bissue{1}),
\bfpage{27}--\blpage{42}
(\byear{1992}).
\doiurl{10.1016/0550-3213(92)90174-a}
\end{barticle}
\endbibitem

\bibitem{kamien:2000}
\begin{barticle}
\bauthor{\bsnm{Kamien}, \binits{R.D.}},
\bauthor{\bsnm{Selinger}, \binits{J.V.}}:
\batitle{Order and frustration in chiral liquid crystals}.
\bjtitle{Journal of Physics: Condensed Matter}
\bvolume{13}(\bissue{3}),
\bfpage{1}--\blpage{22}
(\byear{2000}).
\doiurl{10.1088/0953-8984/13/3/201}
\end{barticle}
\endbibitem

\bibitem{beller:2014}
\begin{barticle}
\bauthor{\bsnm{Beller}, \binits{D.A.}},
\bauthor{\bsnm{Machon}, \binits{T.}},
\bauthor{\bsnm{{\v{C}}opar}, \binits{S.}},
\bauthor{\bsnm{Sussman}, \binits{D.M.}},
\bauthor{\bsnm{Alexander}, \binits{G.P.}},
\bauthor{\bsnm{Kamien}, \binits{R.D.}},
\bauthor{\bsnm{Mosna}, \binits{R.A.}}:
\batitle{Geometry of the cholesteric phase}.
\bjtitle{Physical Review X}
\bvolume{4}(\bissue{3}),
\bfpage{031050}
(\byear{2014}).
\doiurl{10.1103/physrevx.4.031050}
\end{barticle}
\endbibitem

\bibitem{machon:2019}
\begin{barticle}
\bauthor{\bsnm{Machon}, \binits{T.}},
\bauthor{\bsnm{Aharoni}, \binits{H.}},
\bauthor{\bsnm{Hu}, \binits{Y.}},
\bauthor{\bsnm{Kamien}, \binits{R.D.}}:
\batitle{Aspects of defect topology in smectic liquid crystals}.
\bjtitle{Communications in Mathematical Physics}
\bvolume{372}(\bissue{2}),
\bfpage{525}--\blpage{542}
(\byear{2019}).
\doiurl{10.1007/s00220-019-03366-y}
\end{barticle}
\endbibitem

\bibitem{pietila:2009b}
\begin{barticle}
\bauthor{\bsnm{Pietilä}, \binits{V.}},
\bauthor{\bsnm{Möttönen}, \binits{M.}}:
\batitle{Creation of dirac monopoles in spinor bose-einstein condensates}.
\bjtitle{Physical Review Letters}
\bvolume{103}(\bissue{3}),
\bfpage{030401}
(\byear{2009}).
\doiurl{10.1103/physrevlett.103.030401}
\end{barticle}
\endbibitem

\bibitem{ray:2014}
\begin{barticle}
\bauthor{\bsnm{Ray}, \binits{M.W.}},
\bauthor{\bsnm{Ruokokoski}, \binits{E.}},
\bauthor{\bsnm{Kandel}, \binits{S.}},
\bauthor{\bsnm{Möttönen}, \binits{M.}},
\bauthor{\bsnm{Hall}, \binits{D.S.}}:
\batitle{Observation of dirac monopoles in a synthetic magnetic field}.
\bjtitle{Nature}
\bvolume{505}(\bissue{7485}),
\bfpage{657}--\blpage{660}
(\byear{2014}).
\doiurl{10.1038/nature12954}
\end{barticle}
\endbibitem

\bibitem{ray:2015}
\begin{barticle}
\bauthor{\bsnm{Ray}, \binits{M.W.}},
\bauthor{\bsnm{Ruokokoski}, \binits{E.}},
\bauthor{\bsnm{Tiurev}, \binits{K.}},
\bauthor{\bsnm{Mottonen}, \binits{M.}},
\bauthor{\bsnm{Hall}, \binits{D.S.}}:
\batitle{Observation of isolated monopoles in a quantum field}.
\bjtitle{Science}
\bvolume{348}(\bissue{6234}),
\bfpage{544}--\blpage{547}
(\byear{2015}).
\doiurl{10.1126/science.1258289}
\end{barticle}
\endbibitem

\bibitem{hall:2016}
\begin{barticle}
\bauthor{\bsnm{Hall}, \binits{D.S.}},
\bauthor{\bsnm{Ray}, \binits{M.W.}},
\bauthor{\bsnm{Tiurev}, \binits{K.}},
\bauthor{\bsnm{Ruokokoski}, \binits{E.}},
\bauthor{\bsnm{Gheorghe}, \binits{A.H.}},
\bauthor{\bsnm{Möttönen}, \binits{M.}}:
\batitle{Tying quantum knots}.
\bjtitle{Nature Physics}
\bvolume{12}(\bissue{5}),
\bfpage{478}--\blpage{483}
(\byear{2016}).
\doiurl{10.1038/nphys3624}
\end{barticle}
\endbibitem

\bibitem{nakanishi:1988}
\begin{barticle}
\bauthor{\bsnm{Nakanishi}, \binits{H.}},
\bauthor{\bsnm{Hayashi}, \binits{K.}},
\bauthor{\bsnm{Mori}, \binits{H.}}:
\batitle{Topological classification of unknotted ring defects}.
\bjtitle{Communications in Mathematical Physics}
\bvolume{117}(\bissue{2}),
\bfpage{203}--\blpage{213}
(\byear{1988}).
\doiurl{10.1007/bf01223590}
\end{barticle}
\endbibitem

\bibitem{annala-mottonen}
\begin{barticle}
\bauthor{\bsnm{Annala}, \binits{T.}},
\bauthor{\bsnm{Möttönen}, \binits{M.}}:
\batitle{Charge ambiguity and splitting of monopoles}.
\bjtitle{Physical Review Research}
\bvolume{4}(\bissue{2}),
\bfpage{023209}
(\byear{2022})
{\href{https://arxiv.org/abs/arXiv:2109.02892}{{arXiv:2109.02892}}}.
\doiurl{10.1103/physrevresearch.4.023209}
\end{barticle}
\endbibitem

\bibitem{machon:2013}
\begin{barticle}
\bauthor{\bsnm{Machon}, \binits{T.}},
\bauthor{\bsnm{Alexander}, \binits{G.P.}}:
\batitle{Knots and nonorientable surfaces in chiral nematics}.
\bjtitle{Proceedings of the National Academy of Sciences}
\bvolume{110}(\bissue{35}),
\bfpage{14174}--\blpage{14179}
(\byear{2013}).
\doiurl{10.1073/pnas.1308225110}
\end{barticle}
\endbibitem

\bibitem{machon:2014}
\begin{barticle}
\bauthor{\bsnm{Machon}, \binits{T.}},
\bauthor{\bsnm{Alexander}, \binits{G.P.}}:
\batitle{Knotted defects in nematic liquid crystals}.
\bjtitle{Physical Review Letters}
\bvolume{113}(\bissue{2}),
\bfpage{027801}
(\byear{2014}).
\doiurl{10.1103/physrevlett.113.027801}
\end{barticle}
\endbibitem

\bibitem{thomson:1869}
\begin{barticle}
\bauthor{\bsnm{Thomson}, \binits{W.}}:
\batitle{On vortex atoms}.
\bjtitle{Proceedings of the Royal Society of Edinburgh}
\bvolume{6},
\bfpage{94}--\blpage{105}
(\byear{1869}).
\doiurl{10.1017/s0370164600045430}
\end{barticle}
\endbibitem

\bibitem{kleckner:2013}
\begin{barticle}
\bauthor{\bsnm{Kleckner}, \binits{D.}},
\bauthor{\bsnm{Irvine}, \binits{W.T.M.}}:
\batitle{Creation and dynamics of knotted vortices}.
\bjtitle{Nature Physics}
\bvolume{9}(\bissue{4}),
\bfpage{253}--\blpage{258}
(\byear{2013}).
\doiurl{10.1038/nphys2560}
\end{barticle}
\endbibitem

\bibitem{kleckner:2014}
\begin{barticle}
\bauthor{\bsnm{Kleckner}, \binits{D.}},
\bauthor{\bsnm{Scheeler}, \binits{M.W.}},
\bauthor{\bsnm{Irvine}, \binits{W.T.M.}}:
\batitle{The life of a vortex knot}.
\bjtitle{Physics of Fluids}
\bvolume{26}(\bissue{9}),
\bfpage{091105}
(\byear{2014}).
\doiurl{10.1063/1.4893590}
\end{barticle}
\endbibitem

\bibitem{kleckner:2016}
\begin{barticle}
\bauthor{\bsnm{Kleckner}, \binits{D.}},
\bauthor{\bsnm{Kauffman}, \binits{L.H.}},
\bauthor{\bsnm{Irvine}, \binits{W.T.M.}}:
\batitle{How superfluid vortex knots untie}.
\bjtitle{Nature Physics}
\bvolume{12}(\bissue{7}),
\bfpage{650}--\blpage{655}
(\byear{2016}).
\doiurl{10.1038/nphys3679}
\end{barticle}
\endbibitem

\bibitem{poenaru:1977}
\begin{barticle}
\bauthor{\bsnm{Poenaru}, \binits{V.}},
\bauthor{\bsnm{Toulouse}, \binits{G.}}:
\batitle{The crossing of defects in ordered media and the topology of
  3-manifolds}.
\bjtitle{Journal de Physique}
\bvolume{38}(\bissue{8}),
\bfpage{887}--\blpage{895}
(\byear{1977}).
\doiurl{10.1051/jphys:01977003808088700}
\end{barticle}
\endbibitem

\bibitem{annala:2022}
\begin{botherref}
\oauthor{\bsnm{Annala}, \binits{T.}},
\oauthor{\bsnm{Zamora-Zamora}, \binits{R.}},
\oauthor{\bsnm{Möttönen}, \binits{M.}}:
Topologically protected vortex knots and links.
Communications Physics
\textbf{5}(1)
(2022)
{\href{https://arxiv.org/abs/2204.03612}{{arXiv:2204.03612}}}
{[cond-mat.quant-gas]}.
\doiurl{10.1038/s42005-022-01071-2}
\end{botherref}
\endbibitem

\bibitem{monastyrsky:1986}
\begin{barticle}
\bauthor{\bsnm{Monastyrsky}, \binits{M.I.}},
\bauthor{\bsnm{Retakh}, \binits{V.S.}}:
\batitle{Topology of linked defects in condensed matter}.
\bjtitle{Communications in Mathematical Physics}
\bvolume{103}(\bissue{3}),
\bfpage{445}--\blpage{459}
(\byear{1986}).
\doiurl{10.1007/bf01211760}
\end{barticle}
\endbibitem

\bibitem{kawaguchi:2012}
\begin{barticle}
\bauthor{\bsnm{Kawaguchi}, \binits{Y.}},
\bauthor{\bsnm{Ueda}, \binits{M.}}:
\batitle{Spinor bose{\textendash}einstein condensates}.
\bjtitle{Physics Reports}
\bvolume{520}(\bissue{5}),
\bfpage{253}--\blpage{381}
(\byear{2012}).
\doiurl{10.1016/j.physrep.2012.07.005}
\end{barticle}
\endbibitem

\bibitem{fox:1970}
\begin{barticle}
\bauthor{\bsnm{Fox}, \binits{R.H.}}:
\batitle{Metacyclic invariants of knots and links}.
\bjtitle{Canadian Journal of Mathematics}
\bvolume{22}(\bissue{2}),
\bfpage{193}--\blpage{201}
(\byear{1970}).
\doiurl{10.4153/cjm-1970-025-9}
\end{barticle}
\endbibitem

\bibitem{crowell:1977}
\begin{bbook}
\bauthor{\bsnm{Crowell}, \binits{R.H.}},
\bauthor{\bsnm{Fox}, \binits{R.H.}}:
\bbtitle{{Introduction to Knot Theory}}.
\bpublisher{Springer}, \blocation{???}
(\byear{1977}).
\doiurl{10.1007/978-1-4612-9935-6}
\end{bbook}
\endbibitem

\bibitem{conner-floyd:1964}
\begin{bbook}
\bauthor{\bsnm{Conner}, \binits{P.E.}},
\bauthor{\bsnm{Floyd}, \binits{E.E.}}:
\bbtitle{{Differentiable Periodic Maps}}.
\bpublisher{Springer}, \blocation{???}
(\byear{1964}).
\burl{https://www.ebook.de/de/product/33674784/pierre_e_conner_e_e_floyd_differentiable_periodic_maps.html}
\end{bbook}
\endbibitem

\bibitem{conner-floyd:1966}
\begin{barticle}
\bauthor{\bsnm{Conner}, \binits{P.E.}},
\bauthor{\bsnm{Floyd}, \binits{E.E.}}:
\batitle{{Maps of Odd Period}}.
\bjtitle{The Annals of Mathematics}
\bvolume{84}(\bissue{2}),
\bfpage{132}
(\byear{1966}).
\doiurl{10.2307/1970515}
\end{barticle}
\endbibitem

\bibitem{hatcher:2002}
\begin{bbook}
\bauthor{\bsnm{Hatcher}, \binits{A.}}:
\bbtitle{Algebraic Topology}.
\bpublisher{Cambridge University Press},
\blocation{Cambridge New York}
(\byear{2002})
\end{bbook}
\endbibitem

\bibitem{fox:1957}
\begin{bchapter}
\bauthor{\bsnm{Fox}, \binits{R.H.}}:
\bctitle{Covering spaces with singularities}.
In: \bbtitle{Algebraic Geometry and Topology},
pp. \bfpage{243}--\blpage{257}.
\bpublisher{Princeton University Press}, \blocation{???}
(\byear{1957}).
\doiurl{10.1515/9781400879915-019}
\end{bchapter}
\endbibitem

\bibitem{rolfsen:2003}
\begin{bbook}
\bauthor{\bsnm{Rolfsen}, \binits{D.}}:
\bbtitle{Knots and Links}.
\bpublisher{AMS Chelsea Pub},
\blocation{Providence, R.I}
(\byear{2003})
\end{bbook}
\endbibitem

\bibitem{hatcher:ss}
\begin{botherref}
\oauthor{\bsnm{Hatcher}, \binits{A.}}:
{Spectral Sequences}.
online notes
\end{botherref}
\endbibitem

\bibitem{boardman:1999}
\begin{botherref}
\oauthor{\bsnm{Boardman}, \binits{J.M.}}:
Conditionally convergent spectral sequences.
American Mathematical Society
(1999).
\doiurl{10.1090/conm/239/03597}
\end{botherref}
\endbibitem

\bibitem{mckay:1981}
\begin{bchapter}
\bauthor{\bsnm{McKay}, \binits{J.}}:
\bctitle{Graphs, singularities, and finite groups}.
In: \bbtitle{The Santa Cruz Conference on Finite Groups}.
\bsertitle{Proceedings of Symposia in Pure Mathematics},
pp. \bfpage{183}--\blpage{186}.
\bpublisher{American Mathematical Society}, \blocation{???}
(\byear{1981}).
\doiurl{10.1090/pspum/037/604577}
\end{bchapter}
\endbibitem

\bibitem{mostow:1957}
\begin{barticle}
\bauthor{\bsnm{Mostow}, \binits{G.D.}}:
\batitle{Equivariant embeddings in euclidean space}.
\bjtitle{The Annals of Mathematics}
\bvolume{65}(\bissue{3}),
\bfpage{432}
(\byear{1957}).
\doiurl{10.2307/1970055}
\end{barticle}
\endbibitem

\bibitem{palais:1957}
\begin{barticle}
\bauthor{\bsnm{Palais}, \binits{R.}}:
\batitle{Imbedding of compact, differentiable transformation groups in
  orthogonal representations}.
\bjtitle{Indiana University Mathematics Journal}
\bvolume{6}(\bissue{4}),
\bfpage{673}--\blpage{678}
(\byear{1957}).
\doiurl{10.1512/iumj.1957.6.56037}
\end{barticle}
\endbibitem

\bibitem{dalibard:2011}
\begin{barticle}
\bauthor{\bsnm{Dalibard}, \binits{J.}},
\bauthor{\bsnm{Gerbier}, \binits{F.}},
\bauthor{\bsnm{Juzeli{\H{u}}nas}, \binits{G.}},
\bauthor{\bsnm{{\"O}hberg}, \binits{P.}}:
\batitle{Colloquium: Artificial gauge potentials for neutral atoms}.
\bjtitle{Reviews of Modern Physics}
\bvolume{83}(\bissue{4}),
\bfpage{1523}--\blpage{1543}
(\byear{2011}).
\doiurl{10.1103/revmodphys.83.1523}
\end{barticle}
\endbibitem

\bibitem{breiland:2009}
\begin{botherref}
\oauthor{\bsnm{Breiland}, \binits{A.-L.}},
\oauthor{\bsnm{Oesper}, \binits{L.}},
\oauthor{\bsnm{Taalman}, \binits{L.}}:
{$p$-Coloring Classes of Torus Knots}.
Missouri Journal of Mathematical Sciences
\textbf{21}(2)
(2009).
\doiurl{10.35834/mjms/1316027244}
\end{botherref}
\endbibitem

\end{thebibliography}


\end{document}